\newcommand{\myTitle}{Complete Solution of the Lady in the Lake Scenario}
\newtheorem{theorem}{Theorem}
\newtheorem{proposition}{Proposition}
\newtheorem{lemma}{Lemma}
\theoremstyle{remark}
\newtheorem{remark}{Remark}
\DeclareMathOperator*{\argmin}{\arg\min}
\DeclareMathOperator*{\argmax}{\arg\max}
\DeclareMathOperator{\sign}{sign}
\renewcommand{\H}{\mathscr{H}}
\pgfplotsset{compat=newest}
\pgfplotsset{%
    layers/standard/.define layer set={%
        background,axis background,axis grid,axis ticks,axis lines,axis tick labels,pre main,main,axis descriptions,axis foreground%
    }{
        grid style={/pgfplots/on layer=axis grid},%
        tick style={/pgfplots/on layer=axis ticks},%
        axis line style={/pgfplots/on layer=axis lines},%
        label style={/pgfplots/on layer=axis descriptions},%
        legend style={/pgfplots/on layer=axis descriptions},%
        title style={/pgfplots/on layer=axis descriptions},%
        colorbar style={/pgfplots/on layer=axis descriptions},%
        ticklabel style={/pgfplots/on layer=axis tick labels},%
        axis background@ style={/pgfplots/on layer=axis background},%
        3d box foreground style={/pgfplots/on layer=axis foreground},%
    },
}
\tikzset{point/.style={circle, fill, inner sep=1.7}}
\tikzset{point2/.style={regular polygon, regular polygon sides=3, fill, inner sep=1.3}}
\title{\myTitle
    \footnote{
    This paper is based on work performed at the Air Force Research Laboratory (AFRL) \textit{Control Science Center}.
    DISTRIBUTION STATEMENT A.
    Approved for public release.
    Distribution is unlimited.
    AFRL-2024-0127; Cleared 09 JAN 2024.
}}
\author{Alexander Von Moll \and Meir Pachter}
\date{December 2023}
\begin{document}
\maketitle 

\begin{abstract}
    In the Lady in the Lake scenario, a mobile agent, $L$, is pitted against an agent, $M$, who is constrained to move along the perimeter of a circle.
    $L$ is assumed to begin inside the circle and wishes to escape to the perimeter with some finite angular separation from $M$ at the perimeter.
    This scenario has, in the past, been formulated as a zero-sum differential game wherein $L$ seeks to maximize terminal separation and $M$ seeks to minimize it.
    Its solution is well-known.
    However, there is a large portion of the state space for which the canonical solution does not yield a unique equilibrium strategy.
    This paper provides such a unique strategy by solving an auxiliary zero-sum differential game.
    In the auxiliary differential game, $L$ seeks to reach a point opposite of $M$ at a radius for which their maximum angular speeds are equal (i.e., the antipodal point).
    $L$ wishes to minimize the time to reach this point while $M$ wishes to maximize it.
    The solution of the auxiliary differential game is comprised of a Focal Line, a Universal Line, and their tributaries.
    The Focal Line tributaries' equilibrium strategy for $L$ is semi-analytic, while the Universal Line tributaries' equilibrium strategy is obtained in closed form.
\end{abstract}


\section{Introduction}
\label{sec:Introduction}

The Lady in the Lake scenario involves a mobile agent, the Lady, denoted $L$, swimming in a circular lake and another agent, the Man (or Monster), denoted $M$, whose motion is constrained to the perimeter of the lake.
$L$ seeks to reach the perimeter with maximum angular separation from $M$ while the latter seeks to minimize the angular separation.
$L$'s swimming speed is less than $M$'s running speed (otherwise the solution is relatively trivial), however, upon reaching the shore, $L$ can run faster than $M$.

The scenario first appeared in a column in \textit{Scientific American} by Martin Gardner in 1965.
This original problem description was later collected in a book~\cite{gardner1975mathematical} and was also posted in a collection of Gardner's writings~\cite{gardner2008lady}.
Later, the scenario was formulated as a zero-sum differential game and solved as an example in~\cite{breakwell1977lecture}.
Again, the scenario was included as an example in Ba{\c s}ar and Olsder's book~\cite{basar1982chapter} and an analytical solution was provided therein.
According to~\cite{basar1982chapter} the scenario also appeared in the Russian translation of Isaacs' book~\cite{isaacs1965differential}.
Then the scenario was revisited in~\cite{falcone2006numerical}, although, instead of analysis and geometry, numerical methods were used to approximate a solution (presumably because an analytical solution already existed for comparison purposes).
These numerical methods were based upon viscosity solutions of the Hamilton-Jacobi-Isaacs (HJI) partial differential equation.

More recently, the Lady in the Lake scenario has been reintroduced, in much the same way as the original, in the magazine \textit{Quanta} as a mathematical puzzle~\cite{mutalik2021math,mutalik2021mathcan}.
However, in its new incarnation, a twist has been added: in~\cite[Puzzle 2]{mutalik2021math} the reader is asked to determine (essentially) the equilibrium escape time which $L$ seeks to minimize and $M$ seeks to maximize when $L$ starts in the center of the lake.
The readers' and author's solutions account for the possibility of $M$ changing direction in order to foil $L$'s strategy in an effort to drive at the true equilibrium solution.
However, a full differential game treatment of this problem (as well as the more general scenario of any starting position for $L$) has not yet been presented and is out of the scope of the current paper.
Nonetheless, \cite{vonmoll2022circular} analyzed an easier variant of this problem for which $L$ begins \textit{outside} the lake and seeks to enter in minimum time subject to keeping $θ$, the angle between $L$ and $M$, non-zero.
There are also several papers on the topic of evading a finite-range Turret whose solutions resemble the original Lady in the Lake solution with a few added subsolutions~\cite{ivanov1993problem,galyaev2013evading,vonmoll2023turret}.

Although the solution to the original Lady in the Lake scenario have been well-established, several open questions remain (and were mentioned in~\cite{basar1982chapter}).
These questions have to do with a particular point in the lake from which $L$ can guarantee its minimum terminal angular separation.
This point is opposite of $M$ at a radius from the lake's center for which $L$ and $M$'s maximum angular speeds are equal, henceforth, the \textit{antipodal point}, or $E$.
If $L$ were to start under the equilibrium trajectory emanating from $E$ she'd do best by first reaching $E$ and subsequently exiting the lake along the associated equilibrium trajectory.
Therefore, the following questions was raised:
\begin{displaycquote}[p.\ 394,][paraphrased]{basar1982chapter}[]
    If $L$ starts at the lake center and knows $M$'s current action, show that $L$ will reach the antipodal point, $E$.
\end{displaycquote}
Some natural extensions to this question then include:
\begin{enumerate}
    \item How long will it take for $L$ to reach $E$ (i.e., what is the equilibrium, $\min \max$ time)?
    \item What if $L$ starts from general position (i.e., not just starting at the center of the lake)?
\end{enumerate}
This paper answers all of these questions, effectively completing the solution of the Lady in the Lake differential game by providing a unique strategy for the players in a large region of the state space for which the canonical strategy is undefined.

A recent work~\cite{wang2022solving} has sought to address very similar questions.
There, the focus is on finding the minimum time trajectory for $L$ in the region of the state space where she has angular speed advantage over $M$ (which is only a subset of the region for which the canonical strategy for the $\min \max$ terminal angle game is non-unique/undefined).
Ultimately, the authors specify a nonlinear program which utilizes a general numerical optimization solver to obtain minimum time trajectory resulting in $L$ maneuvering to $E$.
This paper builds upon that work by providing a solution which is closed-form for part of the state space and semi-analytic in the other part.

Following in the footsteps of~\cite{isaacs1965differential,breakwell1977lecture,basar1982chapter}, the methodology used within this paper is based upon differential game theory.
In general, obtaining solutions to differential games is a difficult endeavour as it involves solving the HJI, a technique which suffers from the curse of dimensionality~\cite{bernhard2014pursuit-evasion}.
For example, the Homicidal Chauffeur Differential Game (HCDG) has only two states and two parameters and yet its solution (or, at least, the bulk of it) was the subject of a PhD dissertation~\cite{merz1971homicidal} and a multitude of follow-on publications.
This is, in part, due to the abundance and variety of singularities present in its solution~\cite{basar1982chapter}.
Fortunately, the Lady in the Lake differential game has two states and only one parameter (in its most reduced formulation) and its solution is far simpler than that of the HCDG.
As will be shown, the solution, presented here, concerning the $\min \max$ time to reach the point $E$ contains some singularities of its own.
In particular, the solution contains a Focal Line (FL) -- a line which is, itself, an equilibrium trajectory that has tributary equilibrium trajectories that enter tangentially (c.f., e.g.,~\cite{melikyan2005geometry,breakwell1990simple}).
Additionally, the solution also contains a Universal Line (UL) -- a line which, like the FL, is an equilibrium trajectory, but its tributaries do not enter tangentially.
The UL was introduced in the seminal work by Isaacs~\cite{isaacs1965differential}.

The remainder of this paper is summarized as follows.
\Cref{sec:The_Classical_Lady_in_the_Lake_Scenario} contains a rederivation of the classical Lady in the Lake results.
\Cref{sec:time} presents all of the new results for the $\min \max$ time to reach $E$ differential game.
It's broken down into a subsection on the FL and its tributaries, ~\Cref{sec:Focal_Line}, a subsection on the UL and its tributaries, ~\Cref{sec:Universal_Line}, and a summary of the complete solution.
Lastly, the paper is concluded in~\Cref{sec:Conclusion}.
Regarding notation, many symbols are reused in each section and subsection but are typically defined in a specific way for that context.
For example, the symbol $\H$ is used to denote the Hamiltonian which is defined differently in the classical formulation than it is in the $\min \max$ time formulation.

\section{The Classical Lady in the Lake Scenario\texorpdfstring{~\cite{basar1982chapter}}{}}
\label{sec:The_Classical_Lady_in_the_Lake_Scenario}

In this section, the solution given by Ba{\c s}ar and Olsder in~\cite{basar1982chapter} is rederived in detail for the sake of completeness.
Consider the state space region
\begin{equation*}
    \mathcal{R} = \left\{ (r,θ) \mid 0 \le r \le 1,\ 0\le θ \le π \right\}
\end{equation*}
where $μ < 1$ is the speed of $L$.
Without loss of generality, the angular position of $L$ w.r.t.\ $M$ is assumed to be in the range $θ \in \left[ 0, π \right]$.
The relative dynamics are
\begin{alignat}{2}
    \dot{r} &= μ \cos ψ, \qquad &r(0) &= r_0, \label{eq:dr}\\
    \dot{θ} &= \frac{μ}{r} \sin ψ - ω, \qquad &θ(0) &= θ_0,\ 0 \le t \le t_f, \label{eq:dθ}
\end{alignat}
where $(r_0,\ θ_0) \in \mathcal{R}$, $ψ \in \left[ -\pi, \pi \right]$, and $ω \in \left[ -1, 1 \right]$ (all without loss of generality).
The radius of the lake is set to 1 (again, without loss of generality)\footnote{This reduction of the parameter space to just the ratio of agent speeds, $μ$, can be accomplished through a scaling of space and time.}.
The cost/payoff functional is
\begin{equation}
    \label{eq:classical_cost}
    J\left(r, θ, ψ(\cdot), ω(\cdot)\right) = Φ\left(r_f, θ_f\right) = θ_f,
\end{equation}
which $L$ wishes to maximize and $M$ wishes to minimize.
The terminal surface is given by
\begin{equation}
    \label{eq:classical_terminal_surface}
    ϕ(r, θ) = r - 1 = 0
\end{equation}
The Value function, if it exists, gives the equilibrium cost/payoff of the differential game
\begin{equation}
    \label{eq:V_definition}
    V(r, θ) = \max_{ψ(\cdot)} \min_{ω(\cdot)} θ_f = \min_{ω(\cdot)} \max_{ψ(\cdot)} θ_f.
\end{equation}

We begin by forming the Hamiltonian
\begin{equation}
    \label{eq:H_classical}
    \H = λ_r μ \cos ψ + λ_θ \left( \frac{μ}{r} \sin ψ - ω \right),
\end{equation}
where $λ_r$ and $λ_θ$ are state adjoint variables.
The equilibrium state adjoint dynamics are given by~\cite{bryson1975applied}
\begin{align}
    \dot{λ}_r &= -\frac{\partial \H}{\partial r} = λ_θ \frac{μ}{r^2} \sin ψ \label{eq:dλr_classical}\\
    \dot{λ}_θ &= 0. \label{eq:dλθ_classical}
\end{align}
The last equality implies that $λ_θ(t) = λ_θ \forall t \in \left[ 0, t_f \right]$, i.e., that $λ_θ$ is constant along the entire equilibrium trajectory.
At termination, the state adjoint variables must satisfy~\cite{bryson1975applied}
\begin{align}
    λ_{r_f} &= \frac{\partial Φ}{\partial r_f} + ν \frac{\partial ϕ}{\partial r_f} = ν \\
    λ_θ &= λ_{θ_f} = \frac{\partial Φ}{\partial θ_f} + ν \frac{\partial ϕ}{\partial θ_f} = 1,
\end{align}
where $ν$ is an additional adjoint variable.
The equilibrium heading for $L$ must maximize the Hamiltonian, which implies
\begin{equation}
    \label{eq:ψ_general_classical}
    \cos ψ^* = \frac{λ_r}{\sqrt{λ_r^2 + \frac{1}{r^2}}}, \qquad
    \sin ψ^* = \frac{1}{r \sqrt{λ_r^2 + \frac{1}{r^2}}}.
\end{equation}
Meanwhile, the equilibrium control for $M$ must minimize the Hamiltonian, which implies
\begin{equation}
    \label{eq:ω_general_classical}
    ω^* = 1.
\end{equation}
At termination, the Hamiltonian must satisfy
\begin{equation}
    \label{eq:Hf_classical}
    \H_f = -\frac{\partial Φ}{\partial t_f} - ν \frac{\partial ϕ}{\partial t_f} = 0.
\end{equation}
Furthermore, since the system is time-autonomous and $\frac{\partial \H}{\partial t} = 0$ we have $\H = 0 \forall t$.

Evaluating~\Cref{eq:H_classical} at final time and substituting in the equilibrium controls, \Cref{eq:ψ_general_classical,eq:ω_general_classical}, and solving for $ν$ gives
\begin{equation}
    \label{eq:ν_classical}
    ν = \sqrt{\frac{1}{μ^2} - 1}.
\end{equation}
Note that $ν$ must be positive in order for $\dot{r}_f$ to be positive, which is necessary for $L$ to exit the lake.
Repeating this step for general time gives
\begin{equation}
    \label{eq:λr_classical}
    λ_r = \sqrt{\frac{1}{μ^2} - \frac{1}{r^2}}.
\end{equation}
Again, the negative case of the square root can be ruled out since heading towards the center of the lake is never advantageous along the equilibrium trajectory.
Substituting~\Cref{eq:λr_classical} into~\Cref{eq:ψ_general_classical} gives
\begin{equation}
    \label{eq:ψ_classical}
    \cos ψ^* = \sqrt{1 - \frac{μ^2}{r^2}}, \qquad
    \sin ψ^* = \frac{μ}{r}.
\end{equation}
Since $\sin ψ^* = \frac{μ}{r}$, it must be the case that $r \ge μ$.
That is, the equilibrium control strategy for $L$ is only defined when $r \ge μ$.
As noted in~\cite{basar1982chapter}, $L$'s strategy corresponds to heading away from the tangent of the circle of radius $μ$ and results in a straight line in the non-rotating Cartesian coordinate system.

Substituting the equilibrium control strategies, \Cref{eq:ψ_classical,eq:ω_general_classical}, into the dynamics, \Cref{eq:dr,eq:dθ}, and dividing gives
\begin{align}
    \frac{\mathrm{d}θ}{\mathrm{d}r} &= - \frac{1}{μ} \sqrt{1 - \frac{μ^2}{r^2}} \nonumber\\
    \int_{θ_0}^{θ_f} \mathrm{d}θ &= -\frac{1}{μ} \int_{r_0}^{r_f}\sqrt{1 - \frac{μ^2}{r^2}} \mathop{\mathrm{d}r} \nonumber\\
    θ_f - θ_0 &= -\frac{1}{μ} \left[ \sqrt{r_f^2 - μ^2} - μ \cos^{-1}\left( \frac{μ}{r_f} \right) - \sqrt{r_0^2 - μ^2} + μ \cos^{-1}\left( \frac{μ}{r_0} \right) \right]. \label{eq:flow_classical}
\end{align}
By setting $r_f = 1$ in the above, the Value function is given by
\begin{equation}
    \label{eq:V_classical_solution}
    V(r, θ) = θ - \sqrt{\frac{1}{μ^2} - 1} + \cos^{-1}μ + \sqrt{\frac{r^2}{μ^2} - 1} - \cos^{-1}\left( \frac{μ}{r} \right).
\end{equation}
Define $θ_T = V(μ, π)$, i.e.,
\begin{equation}
    \label{eq:θ_T}
    θ_T = π - \sqrt{\frac{1}{μ^2} - 1} + \cos^{-1}μ
\end{equation}
Note that $L$ can only escape from the point $E = (μ, π)$ if $θ_T > 0$ which implies that $μ > μ_{\text{crit}} \approx 0.21723$.
For the remainder of the paper it is assumed that $L$'s speed is above this critical value.

Now, define the equilibrium trajectory which departs from $E$ and exits the lake as $B$.
Based on~\Cref{eq:flow_classical,eq:θ_T}, then,
\begin{equation}
    \label{eq:B}
    B(r) = π - \sqrt{\frac{r^2}{μ^2} - 1} + \cos^{-1}\left( \frac{μ}{r} \right), \qquad r \in \left[ μ, 1 \right].
\end{equation}
If the state is such that $θ < B(r)$ then $θ_f < θ_T$ from~\Cref{eq:flow_classical,eq:θ_T}.
Therefore, it would be better for $L$ to navigate to the point $E$ and depart along $B$ in order to achieve $θ_f = θ_T$.
\Cref{fig:classical} shows the equilibrium trajectories for the classical solution.
Note the large blank area of the state space for which no unique equilibrium trajectory exists and $L$ is prescribed to swim to the point $E$ and subsequently take the $B$ trajectory.
\begin{figure}[htpb]
    \centering
    \includegraphics[width=0.95\textwidth]{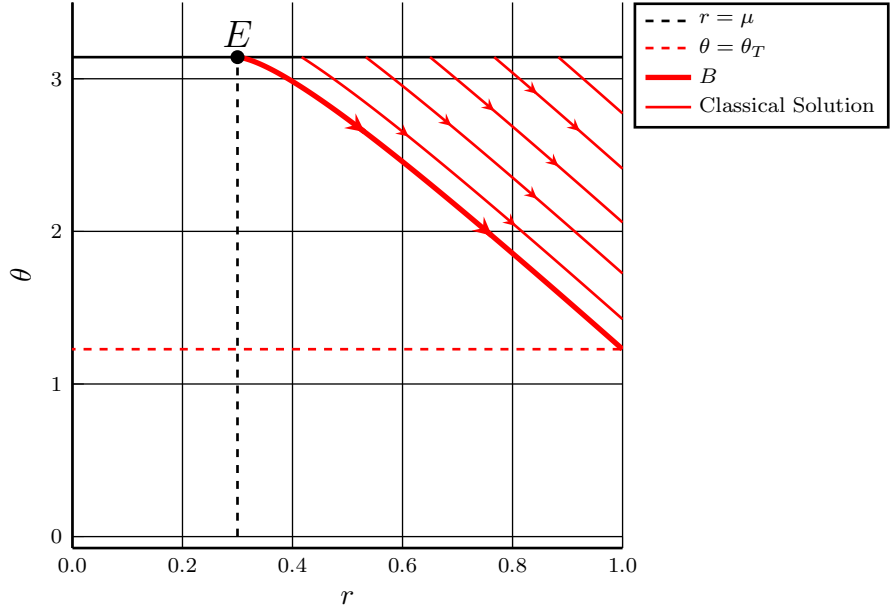}
    \caption{Equilibrium flowfield for the classical Lady in the Lake differential game for $μ = 0.3$.}
    \label{fig:classical}
\end{figure}

The curve $B$ is a barrier surface (in the language of Isaacs~\cite{isaacs1965differential}).
That is, neither agent can steer the state of the system towards or across the surface on their opponent's respective side.
For example, if the state $(r, θ)$, is below $B$, then $L$ cannot force the state onto $B$ (hence why she is prescribed to swim to $E$ first).
Let $\overrightarrow{n}$ be a vector that is normal (pointing up and to the right) to the curve $B$,
\begin{equation}
    \label{eq:n}
    \overrightarrow{n} = \begin{bmatrix} -\frac{\mathrm{d}θ}{\mathrm{d}r} & 1\end{bmatrix}^\top.
\end{equation}
A barrier curve is characterized by
\begin{equation}
    \label{eq:barrier_definition}
    \min_{ω} \max_{ψ} \left\{ \overrightarrow{n} \cdot \begin{bmatrix} \dot{r} & \dot{θ}\end{bmatrix}^\top \right\}= 0.
\end{equation}
Expanding this condition gives
\begin{equation*}
    \min_ω \max_ψ -\frac{\mathrm{d}θ}{\mathrm{d}r} μ \cos ψ + \frac{μ}{r} \sin ψ - ω = 0,
\end{equation*}
which implies that the minimizing and maximizing controls are, respectively, $ω = 1$ and
\begin{equation*}
    \cos ψ = \frac{\frac{\mathrm{d}θ}{\mathrm{d}r}}{\sqrt{\left( \frac{\mathrm{d}θ}{\mathrm{d}r} \right)^2 + \frac{1}{r^2}}}, \qquad
    \sin ψ = \frac{1}{r \sqrt{\left( \frac{\mathrm{d}θ}{\mathrm{d}r} \right)^2 + \frac{1}{r^2}}}.
\end{equation*}
Taking the derivative of~\Cref{eq:B} and substituting into the above expressions shows that~\Cref{eq:barrier_definition} is indeed satisfied.
Furthermore, the condition holds for any curve that is an additive constant w.r.t.\ $B$, hence why there is no hope in $L$ being able to reach $B$ from below.

\section{Min-Max Time to Reach the Antipodal Point}
\label{sec:time}

In this section, we wish to obtain unique trajectories in the region of the state space below the barrier, $B$, that are optimal in some sense.
Specifically, we aim to populate this region with trajectories which reach the point $E = (μ, π)$ such that the time spent getting there is in equilibrium w.r.t.\ the two agents' control strategies.

\subsection{Focal Line}
\label{sec:Focal_Line}

\begin{proposition}
    \label{prop:focal}
    There is a Focal Line (FL) given by
    \begin{equation}
        \label{eq:focal_line}
        \mathcal{F} = \left\{ (r,θ) \mid 0 < r \leq μ,\ θ = π \right\},
    \end{equation}
    wherein $L$'s equilibrium control keeps the state of the state of the system on the line $θ = π$ (i.e., she chooses the heading, $ψ$, s.t.\ $\dot{θ} = 0$):
    \begin{equation}
        \label{eq:focal_line_control}
        \sin ψ_{FL} = \frac{r}{μ},
    \end{equation}
    and $M$'s equilibrium control is
    \begin{equation}
        \label{eq:focal_line_ω}
        ω_{FL} = 1.
    \end{equation}
\end{proposition}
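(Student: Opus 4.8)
The plan is, in order, to (i) show that the prescribed pair of controls renders $\mathcal F$ invariant and funnels the state to $E$; (ii) verify that $(\psi_{FL},\omega_{FL})$ satisfy the first--order (singular--arc) conditions of the minimum--time game along $\mathcal F$, by exhibiting the associated multipliers; and (iii) invoke the reflection symmetry of the game about $\theta=\pi$ together with the tributary construction in the rest of \Cref{sec:Focal_Line} to conclude that $\mathcal F$ is a Focal Line in the sense described in \Cref{sec:Introduction}.

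Step (i) is immediate. Substituting \Cref{eq:focal_line_control,eq:focal_line_ω} into \Cref{eq:dθ} gives $\dot\theta=\tfrac{\mu}{r}\cdot\tfrac{r}{\mu}-1=0$, so $\theta\equiv\pi$, while \Cref{eq:dr} gives $\dot r=\mu\cos\psi_{FL}=\sqrt{\mu^2-r^2}>0$ for $0<r<\mu$. Hence the state slides monotonically up $\mathcal F$ and reaches $E=(\mu,\pi)$ in finite time $\tfrac{\pi}{2}-\arcsin(r/\mu)$. This already yields the candidate value $V(r,\pi)=\tfrac{\pi}{2}-\arcsin(r/\mu)$ along $\mathcal F$, so that $\lambda_r=V_r=-(\mu^2-r^2)^{-1/2}$ there.

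Step (ii). With the Hamiltonian for the $\min\max$--time problem, $\H=1+\lambda_r\mu\cos\psi+\lambda_\theta\!\left(\tfrac{\mu}{r}\sin\psi-\omega\right)$, the equilibrium is characterized by $\min_\psi\max_\omega\H=0$. The $\omega$--maximization gives $\omega^*=-\sign\lambda_\theta$ and the $\psi$--minimization gives $\cos\psi^*=-\lambda_r/\rho$, $\sin\psi^*=-\lambda_\theta/(r\rho)$ with $\rho=\sqrt{\lambda_r^2+\lambda_\theta^2/r^2}$. Requiring $\psi^*=\psi_{FL}$ forces $\sin\psi^*=r/\mu>0$, hence $\lambda_\theta<0$ and therefore $\omega^*=1$, matching \Cref{eq:focal_line_ω}; imposing $\H=0$ and eliminating $\rho$ then leaves the single relation $\lambda_\theta=-r^2/(\mu^2-r^2)$, after which one checks directly that this $(\lambda_r,\lambda_\theta)$ satisfies $\H=0$ with $(\psi_{FL},\omega_{FL})$ (both sides of the resulting identity collapse to $\mu^4/(\mu^2-r^2)^2$). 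Thus $\mathcal F$ is a genuine singular equilibrium arc. Note $\lambda_\theta\neq 0$ on $\mathcal F$, so $V$ is continuous but has a crease in $V_\theta$ across $\mathcal F$, as expected for a singular line.

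Step (iii) is the main obstacle: upgrading the necessary conditions to genuine optimality for both players and classifying $\mathcal F$ as \emph{focal}. For $M$: any $\omega$ that keeps $L$ pinned to $\theta=\pi$ forces $\dot r=\mu\sqrt{1-\omega^2 r^2/\mu^2}$, decreasing in $|\omega|$, so $|\omega|=1$ is $M$'s best while the state remains on $\mathcal F$ (a complementary argument rules out $M$ gaining by steering off $\mathcal F$), and $\omega_{FL}=1$ versus $-1$ are equivalent by symmetry. For $L$: any deviation from $\psi_{FL}$ drives the state off $\theta=\pi$, but under $(\theta,\psi,\omega)\mapsto(2\pi-\theta,-\psi,-\omega)$ the dynamics \Cref{eq:dr,eq:dθ} and the target $E$ are invariant, so $\theta=\pi$ is a symmetry locus; combined with monotonicity of $V$ in the angular separation this makes $V(r,\cdot)$ minimal at $\theta=\pi$, so $L$ gains nothing by leaving $\mathcal F$. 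The delicate pieces -- the monotonicity of $V$ in $\theta$, and especially the verification that the tributary equilibrium trajectories built in the remainder of \Cref{sec:Focal_Line} meet $\mathcal F$ \emph{tangentially} (the feature distinguishing a Focal Line from a Universal Line) -- are exactly what the subsequent development provides; conditional on those, $\mathcal F$ is the claimed Focal Line with equilibrium controls \Cref{eq:focal_line_control,eq:focal_line_ω}.
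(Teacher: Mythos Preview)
Your proposal is correct and more technical than the paper's own argument. The paper's proof is heuristic: it argues informally that (a) since $L$'s relative angular control authority decreases as $r$ grows, any deviation from $\theta=\pi$ is better corrected earlier than later, hence never; and (b) $M$ should not try to exploit his informational advantage by switching, because $L$ could then abandon the matching strategy and swim straight to $E$, arriving sooner. Your route instead (i) verifies invariance and computes the finite arrival time, (ii) exhibits multipliers $(\lambda_r,\lambda_\theta)=\bigl(-(\mu^2-r^2)^{-1/2},\,-r^2/(\mu^2-r^2)\bigr)$ certifying $(\psi_{FL},\omega_{FL})$ as the $\min_\psi\max_\omega$ of $\H$ with $\H=0$ at each point of $\mathcal F$, and (iii) uses the reflection symmetry $\theta\mapsto 2\pi-\theta$ together with monotonicity of $V$ to argue saddle--point optimality. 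Both proofs defer the focal classification (tangential entry of tributaries) to the subsequent lemmas, so neither is self--contained in that respect.

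What each buys: your multiplier computation is exactly the paper's later \Cref{eq:λrf_FL,eq:ν_FL} evaluated along $\mathcal F$, so step~(ii) genuinely anticipates and ties into the tributary analysis; the symmetry argument in step~(iii) is cleaner than the paper's ``control authority'' heuristic. Conversely, the paper more explicitly confronts the information pattern on the FL (who commits first), which your $M$--side argument (``any $\omega$ that keeps $L$ pinned'') assumes away. One point you should make explicit: the $\lambda_\theta(r)$ you compute varies along $\mathcal F$ and therefore does \emph{not} satisfy the adjoint equation $\dot\lambda_\theta=0$ there; this is expected for a singular line (as you note via the crease in $V_\theta$), since these are one--sided tributary multipliers rather than multipliers along a regular arc, but the reader may otherwise read step~(ii) as claiming a classical extremal.
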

\begin{proof}
    Since $r < μ$, $L$ needs to increase $r \to μ$.
    The goal of $L$ is to reach the point $E = (μ, π)$.
    Any deviation of $θ$ from $π$ will need to be recovered at some point along the trajectory in order to end up at $E$.
    Also $L$'s relative control authority over the $θ$ state is decreasing as she increases $r$.
    Thus any deviation would be best dealt with earlier in the trajectory rather than later.
    Taking this argument to the extreme: it is best for $L$ to keep $θ = π$ along the entire trajectory.
    Regarding $M$'s control, he has some informational advantage in that, technically, $L$ must know his instantaneous control input in order for her to implement her singular control.
    However, if, for example, $M$ were to switch many times (thereby forcing $L$ to have to guess and possibly be wrong many times) $L$ could instead choose $\sin ψ = 0$ and head directly to $E$, arriving in a shorter time.
    In other words, $M$'s efforts to exploit his informational advantage are, themselves, easily exploitable.
    Hence, $M$ should adopt either $ω = 1$ or $ω = -1$ while on the FL, and thus the former is taken without loss of generality.
\end{proof}

\begin{remark}
    The proposed control for $L$ along the FL also keeps the state of the system on the FL itself, which, of course, is one of the properties which makes this surface a FL.
    The other property is that trajectories entering the FL do so tangentially; this property will be proven later.
\end{remark}

Substituting the FL controls, \Cref{eq:focal_line_control,eq:focal_line_ω}, into the dynamics, \Cref{eq:dr}, gives
\begin{align*}
    \dot{r} &= μ \sqrt{1 - \frac{r^2}{μ^2}} \\
            &= \sqrt{μ^2 - r^2}.
\end{align*}
This expression can be used to obtain the amount of time spent on the FL until the point $(r, θ) = (μ, π)$ is reached as follows.


\begin{align*}
    \dot{r} = \frac{\mathrm{d}r}{\mathrm{d}t} &= \sqrt{μ^2 - r^2} \\
    \frac{\mathrm{d}r}{\sqrt{μ^2 - r^2}} &= \mathrm{d}t \\
    \frac{\mathrm{d}r}{μ \sqrt{1 - \frac{r^2}{μ^2}}} &= \mathrm{d}t
\end{align*}
Let $x \equiv \frac{r}{μ}$, and thus, $μ \mathrm{d}x = \mathrm{d}r$:
\begin{align*}
    \frac{\mathrm{d}x}{\sqrt{1 - x^2}} &= \mathrm{d}t \\
\end{align*}
Finally, this equation can be integrated; on the LHS the integration bounds are $x = \tfrac{s}{μ}$ to $1$ (which corresponds to $r$ starting at $s$ and going to $μ$), and the RHS just becomes the time spent on the FL, $t_s$:
\begin{align*}
    \int_{s / μ}^{1} \frac{\mathrm{d}x}{\sqrt{1 - x^2}} &= \int_0^{t_s} \mathrm{d}t = t_s \\
    \implies \left.\sin^{-1}\left( x \right)\right|_{s/μ}^1 &= t_s \\
        \frac{π}{2} - \sin^{-1}\left( \frac{s}{μ} \right) &= t_s
\end{align*}

With the time spent on the FL in hand, the next step is to characterize the FL tributaries, which are those equilibrium trajectories that merge onto the FL.
In order to do so, the game is reformulated as a game which begins from a general initial condition and ends on the FL.

\subsubsection{Equilibrium Heading for FL Tributaries}
\label{sec:Analysis_of_FL_Tributaries}

The terminal manifold is
\begin{equation}
    \label{eq:M}
    \mathcal{M} = \left\{ (r,θ) \mid 0 < r \le μ,\ θ = π \right\}.
\end{equation}
%
$\mathcal{M}$ is also the zero-level set of the function
\begin{equation}
    \label{eq:ϕ_FL}
    ϕ(r, θ) = θ - π.
\end{equation}
For the remainder of the paper, $s$ is used to denote the value of $r$ wherein the state enters the FL.
Thus, for the analysis concerning FL tributaries $r_f = s$ and $θ_f = π$
The terminal cost is the time for $L$ to proceed along the line $θ = π$ from $r = s$ to $r = μ$:
\begin{equation}
    \label{eq:h}
    Φ(s) = \frac{π}{2} - \sin^{-1}\left( \frac{s}{μ} \right).
\end{equation}
%
%
The performance functional is the total time taken by $L$ to reach the point $E \equiv \left( μ, π \right)$ (by way of reaching $(r, θ) = (s, π)$ first)
\begin{equation}
    \label{eq:Js}
    J(ψ(\cdot)) = Φ(s) + \int_0^{t_f} 1 \mathop{\mathrm{d} t},
\end{equation}
which $L$ wishes to minimize.
The Hamiltonian of the system is
\begin{align}
    \H &= λ_r μ \cos ψ + λ_θ \left( \frac{μ}{r} \sin ψ - ω \right) + 1,
    \label{eq:H_FL_generic}
\end{align}
where $λ_r$ and $λ_θ$ are adjoint variables associated with the $r$ and $θ$ states, respectively.
The value of the Hamiltonian at terminal time is given by~\cite{bryson1975applied}
\begin{equation}
    \label{eq:Hf_FL}
    \H_f = -\frac{\partial Φ}{\partial t} - ν \frac{\partial ϕ}{\partial t} = 0.
\end{equation}
Since $\frac{\partial \H}{\partial t} = 0$ and the system's dynamics are time-autonomous we have $\frac{\mathrm{d}\H}{\mathrm{d}t} = 0$ and thus $\H(t) = 0 \forall t \in \left[ 0, t_f \right]$.
The value of the adjoint variables at terminal time are given by~\cite{bryson1975applied}
\begin{align}
    λ_{r_f} &= \frac{\partial Φ}{\partial s} + ν \frac{\partial ϕ}{\partial s} = \frac{-1}{\sqrt{μ^2 - r_f^2}} \label{eq:λrf_FL} \\
    λ_{θ_f} &= \frac{\partial Φ}{\partial θ_f} + ν \frac{\partial ϕ}{\partial θ_f} = ν \label{eq:λθf_FL},
\end{align}
where $ν$ is an additional adjoint variable.
The optimal adjoint dynamics are given by~\cite{bryson1975applied}
\begin{align}
    \dot{λ}_r &= -\frac{\partial \H}{\partial r} = -λ_θ \frac{μ}{r^2}\sin ψ^* \label{eq:λrdot_FL_generic} \\ 
    \dot{λ}_θ &= -\frac{\partial \H}{\partial θ} = 0.
    \label{eq:λθdot_FL}
\end{align}
Since $\dot{λ}_θ = 0$ we have that $λ_θ = ν\ \forall t\in\left[ 0, t_f \right]$.

The equilibrium action for $L$ is one that minimizes the Hamiltonian, $ψ^* = \argmin_ψ \H$, and therefore the vector $\begin{bmatrix} \cos ψ^* & \sin ψ^*\end{bmatrix}$ should be antiparallel with the vector $\begin{bmatrix} λ_r & \frac{ν}{r}\end{bmatrix}$
\begin{equation}
    \label{eq:ψ_FL_generic}
    \cos ψ^* = \frac{-λ_r}{\sqrt{λ_r^2 + \frac{ν^2}{r^2}}}, \qquad
    \sin ψ^* = \frac{-ν}{r \sqrt{λ_r^2 + \frac{ν^2}{r^2}}}
\end{equation}
Similarly, the equilibrium action for $M$ is one that maximizes the Hamiltonian, $ω^* = \argmax_ω \H$, hence,
\begin{equation}
    \label{eq:ω_FL_generic}
    ω^* = -\sign ν.
\end{equation}
%
Assuming $ω > 0$, substituting the equilibrium controls into the Hamiltonian, \Cref{eq:H_FL_generic}, gives
\begin{align}
    \H^* = -μ \sqrt{λ_r^2 + \frac{ν^2}{r^2}} - ν + 1 &= 0 \nonumber \\
    \implies \sqrt{λ_r^2 + \frac{ν^2}{r^2}} &= \frac{1 - ν}{μ}.
    \label{eq:simpler_FL_denom}
\end{align}
Substituting the terminal value of $λ_r$, \Cref{eq:λrf_FL}, and the terminal value of $r$ (i.e., $s$) into the above and solving for $ν$ gives
%
%
%
\begin{equation}
    \label{eq:ν_FL}
    ν = \frac{-s^2}{μ^2 - s^2},
\end{equation}
which is negative since $s < μ$ along the FL.
Substitution into~\Cref{eq:ω_FL_generic} confirms that, indeed, $ω$ is positive.

Substituting~\Cref{eq:ψ_FL_generic} and~\Cref{eq:ν_FL} into~\Cref{eq:H_FL_generic}, evaluating at a general time, and solving for $λ_r$ gives
\begin{equation}
    \label{eq:λr_in_terms_of_r}
    λ_r = \pm \frac{1}{μ^2 - s^2} \sqrt{μ^2 - \frac{s^4}{r^2}}.
\end{equation}
From~\Cref{eq:λrf_FL} it's clear that, at terminal time, $λ_r < 0$ which from~\Cref{eq:ψ_FL_generic} and~\Cref{eq:dr} implies that $\dot{r}_f > 0$.
Also, \Cref{eq:λr_in_terms_of_r} shows that $λ_r = 0$ when $r = \frac{s^2}{μ}$.
Therefore, it must be the case that $λ_r$ (and, consequently, $\dot{r}$) changes sign once the system passes through $r = \tfrac{r_f^2}{μ}$ since $\sign(\dot{λ}_r) = -\sign(\sin ψ^*) = -1$ from~\Cref{eq:λrdot_FL_generic,eq:ψ_FL_generic,eq:ν_FL}.

\begin{lemma}
    \label{lem:ψ_FL}
    The equilibrium heading for $L$ along FL tributaries is given by
    \begin{equation}
        \label{eq:ψ_FL}
        \cos ψ^* = \pm \sqrt{1 - \frac{s^4}{μ^2 r^2}}, \qquad
        \sin ψ^* = \frac{s^2}{μ r}.
    \end{equation}
\end{lemma}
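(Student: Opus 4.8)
The plan is to derive the equilibrium heading directly from the expression for $λ_r$ already obtained in \Cref{eq:λr_in_terms_of_r}, together with the value of $ν$ in \Cref{eq:ν_FL}, and substitute both into the optimality conditions \Cref{eq:ψ_FL_generic}. The key observation is that the messy denominator $\sqrt{λ_r^2 + ν^2/r^2}$ appearing in \Cref{eq:ψ_FL_generic} was already computed in closed form via the Hamiltonian condition: from \Cref{eq:simpler_FL_denom} we have $\sqrt{λ_r^2 + ν^2/r^2} = (1-ν)/μ$, and plugging in $ν = -s^2/(μ^2-s^2)$ gives $1 - ν = μ^2/(μ^2-s^2)$, so the denominator equals $μ/(μ^2-s^2)$. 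This is the crucial simplification that makes everything collapse to the clean form in the statement.

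First I would compute $\sin ψ^*$. From \Cref{eq:ψ_FL_generic}, $\sin ψ^* = \dfrac{-ν}{r\sqrt{λ_r^2 + ν^2/r^2}} = \dfrac{-ν}{r}\cdot\dfrac{μ^2-s^2}{μ}$. Substituting $-ν = s^2/(μ^2-s^2)$, the factors of $(μ^2-s^2)$ cancel and we are left with $\sin ψ^* = \dfrac{s^2}{μ r}$, exactly as claimed. Next I would compute $\cos ψ^*$. Two routes are available: either substitute $λ_r$ from \Cref{eq:λr_in_terms_of_r} into $\cos ψ^* = -λ_r/\sqrt{λ_r^2+ν^2/r^2}$ directly, or simply use $\cos^2 ψ^* = 1 - \sin^2 ψ^* = 1 - s^4/(μ^2 r^2)$, which immediately gives $\cos ψ^* = \pm\sqrt{1 - s^4/(μ^2 r^2)}$. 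I would probably present the first route for rigor (it pins down that the $\pm$ matches the $\pm$ in \Cref{eq:λr_in_terms_of_r}) and note the Pythagorean check as confirmation.

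The remaining issue — and the only real subtlety — is the sign ambiguity on $\cos ψ^*$. I would address it by invoking the discussion immediately preceding the lemma: at terminal time $λ_r < 0$ by \Cref{eq:λrf_FL}, so $\cos ψ^* > 0$ there (by \Cref{eq:ψ_FL_generic}), i.e.\ $L$ is moving outward, $\dot r_f > 0$; but since $\sign(\dot λ_r) = -\sign(\sin ψ^*) = -1$ and $λ_r$ vanishes at $r = s^2/μ$, the sign of $λ_r$ — hence of $\cos ψ^*$ and of $\dot r$ — flips as the trajectory crosses $r = s^2/μ$. So the upper branch ($\cos ψ^* > 0$, outward motion) applies for $r > s^2/μ$ and the lower branch ($\cos ψ^* < 0$, inward motion) for $r < s^2/μ$; both branches are genuinely realized along a single tributary, which is why the $\pm$ is retained in the statement. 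I expect the sign bookkeeping to be the main obstacle to a clean exposition — the algebra itself is a one-line cancellation once \Cref{eq:simpler_FL_denom,eq:ν_FL} are in hand — so I would devote most of the written proof to making the branch structure explicit and only a sentence or two to the substitution.
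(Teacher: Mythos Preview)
Your approach is exactly the paper's: the proof there is the single sentence ``Substitution of~\Cref{eq:simpler_FL_denom,eq:ν_FL} into~\Cref{eq:ψ_FL_generic} gives the above expressions,'' and you have simply carried that substitution out in detail. The computation of $\sin ψ^*$ and $\cos ψ^*$ is correct.

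One correction to your sign commentary (which goes beyond what the paper's proof of this lemma contains, since the paper handles the sign in the surrounding text): you write that the positive branch of $\cos ψ^*$ applies for $r > s^2/μ$ and the negative branch for $r < s^2/μ$. But $r$ never drops below $s^2/μ$ along a tributary --- indeed~\Cref{eq:λr_in_terms_of_r} is only real for $r \ge s^2/μ$, and $r = s^2/μ$ is precisely the minimum radius (see~\Cref{eq:τ_bar}). The two branches are distinguished by \emph{time} (before vs.\ after the minimum-radius instant $\bar τ$), not by the value of $r$; both branches live in the range $r \ge s^2/μ$. This does not affect the proof of the lemma as stated, since the lemma only asserts $\pm$, but you should fix the wording if you keep that paragraph.
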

\begin{proof}
    Substitution of~\Cref{eq:simpler_FL_denom,eq:ν_FL} into~\Cref{eq:ψ_FL_generic} gives the above expressions.
\end{proof}
\begin{lemma}
    The equilibrium control for $M$ along FL tributaries is given by
    \begin{equation}
        \label{eq:ω_FL_tributaries}
        ω^* = 1.
    \end{equation}
\end{lemma}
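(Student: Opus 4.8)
The plan is to read the result directly off the optimality condition for $M$ that has already been established, combined with the sign of $\nu$ computed in \Cref{eq:ν_FL}. First I would recall that the only $\omega$-dependent term in the Hamiltonian \Cref{eq:H_FL_generic} is $-λ_θ ω$, and that \Cref{eq:λθdot_FL} forces $λ_θ$ to equal the constant $ν$ along the entire tributary. Hence maximizing $\H$ over $ω \in [-1,1]$ gives the bang-bang law $ω^* = -\sign ν$ (this is exactly \Cref{eq:ω_FL_generic}), valid whenever $ν \ne 0$.

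Next I would invoke \Cref{eq:ν_FL}, which gives $ν = -s^2/(μ^2 - s^2)$. On the focal line $\mathcal{F}$ — equivalently on the terminal manifold $\mathcal{M}$ — we have $0 < s \le μ$, so the numerator $-s^2$ is strictly negative while the denominator $μ^2 - s^2$ is nonnegative, and in fact strictly positive except at the single endpoint $s = μ$ (the point $E$ itself), where the tributary problem is vacuous. Consequently $ν < 0$, and therefore $ω^* = -\sign ν = 1$, and this holds uniformly over every FL tributary since the conclusion does not depend on which $s \in (0,μ]$ labels the tributary.

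The only point requiring a moment's care is the well-definedness of the $\argmax$: the characterization $ω^* = -\sign ν$ presupposes $ν \ne 0$, i.e.\ that $M$'s control is not singular. I would observe that $s > 0$ strictly on $\mathcal{F}$ guarantees $ν \ne 0$, ruling out any singular-control ambiguity for $M$, and I would note that this conclusion is self-consistent with the earlier derivation of $ν$, which was carried out under the working hypothesis $ω > 0$. I do not anticipate any genuine obstacle here — the argument is entirely sign bookkeeping — and the only thing to be vigilant about is matching the sign conventions in \Cref{eq:ω_FL_generic,eq:ν_FL} so that the double negative resolves to $ω^* = +1$.

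\begin{proof}
From \Cref{eq:λθdot_FL} we have $λ_θ \equiv ν$ along the tributary, so the $ω$-dependent part of $\H$ in \Cref{eq:H_FL_generic} is $-ν ω$; maximizing over $ω \in [-1,1]$ yields $ω^* = -\sign ν$, as in \Cref{eq:ω_FL_generic}. By \Cref{eq:ν_FL}, $ν = -s^2/(μ^2 - s^2)$, and since $0 < s < μ$ on the interior of $\mathcal{F}$ this is strictly negative. Hence $ω^* = -\sign ν = 1$.
\end{proof}
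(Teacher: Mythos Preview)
Your proof is correct and follows exactly the same approach as the paper: the paper's entire proof is the single sentence ``The result follows directly from~\Cref{eq:ω_FL_generic,eq:ν_FL},'' and your argument simply unpacks that sentence with the sign bookkeeping made explicit.
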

\begin{proof}
    The result follows directly from~\Cref{eq:ω_FL_generic,eq:ν_FL}.
\end{proof}
\begin{lemma}
    \label{lem:FL_tributaries_straight}
    The equilibrium FL tributary trajectory is a straight line in the global Cartesian frame.
\end{lemma}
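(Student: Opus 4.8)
The plan is to exploit the fact that \Cref{lem:ψ_FL} gives $\sin ψ^* = \tfrac{s^2/μ}{r}$, which has exactly the form of the classical equilibrium heading $\sin ψ^* = \tfrac{μ}{r}$ of \Cref{sec:The_Classical_Lady_in_the_Lake_Scenario} with $μ$ replaced by $s^2/μ$. There the resulting motion was a straight line tangent to the circle of radius $μ$, so here one expects a straight line tangent to the circle of radius $s^2/μ$. To make this precise I would show that $L$'s velocity direction in the inertial (non-rotating) Cartesian frame is constant along an FL tributary, since any arc traced at nonzero speed with constant velocity direction is a straight segment.

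First I would relate the reduced heading $ψ$ to $L$'s inertial heading. Let $\varphi_L$ denote $L$'s angular position about the lake center in the inertial frame. Resolving $L$'s velocity into radial and transverse components, using $\dot r = μ\cos ψ$ from \Cref{eq:dr} together with the constraint that $L$ moves at speed $μ$, gives $r\dot\varphi_L = μ\sin ψ$; this is consistent with \Cref{eq:dθ} once one notes that the $-ω$ term there is exactly $M$'s angular rate. Hence $L$'s inertial heading, measured from the positive $x$-axis, is $\chi := \varphi_L + ψ^*$, and it suffices to prove $\dot\chi \equiv 0$.

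Next I would differentiate along a tributary. From \Cref{lem:ψ_FL}, $\dot\varphi_L = \tfrac{μ}{r}\sin ψ^* = \tfrac{s^2}{r^2}$; differentiating $\sin ψ^* = \tfrac{s^2}{μ r}$ in time and substituting $\dot r = μ\cos ψ^*$ gives $\cos ψ^*\,\dot ψ^* = -\tfrac{s^2}{μ r^2}\dot r = -\tfrac{s^2\cos ψ^*}{r^2}$, whence $\dot ψ^* = -\tfrac{s^2}{r^2}$ wherever $\cos ψ^* \neq 0$. Adding the two, $\dot\chi = \dot\varphi_L + \dot ψ^* = 0$ on every interval on which $\cos ψ^* \neq 0$. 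It then remains to patch across the single radius $r = s^2/μ$ at which $\cos ψ^*$ vanishes: as established following \Cref{eq:λr_in_terms_of_r}, $\dot r$ changes sign exactly once, precisely at this radius, so the trajectory merely touches the circle $r = s^2/μ$ from outside at one instant, where $\sin ψ^* \to 1$ and hence $ψ^* \to π/2$ from both sides, while $\varphi_L$ is continuous; thus $\chi$ is continuous there. Being locally constant off a single point and continuous throughout, $\chi$ is constant along the entire tributary, which is therefore a straight segment. Finally, since $ψ^* = \chi - \varphi_L$ is the angle between the radius vector and this segment, the relation $\sin ψ^* = \tfrac{s^2/μ}{r}$ identifies the line as the one lying at perpendicular distance $s^2/μ$ from the lake center, i.e., tangent to the circle of radius $s^2/μ$.

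The calculation is brief; the only points requiring care are (i) using the inertial angular rate $\dot\varphi_L = \tfrac{μ}{r}\sin ψ^*$ rather than the relative rate $\dot θ$ when forming $\chi$, and (ii) checking that the sign ambiguity of $\cos ψ^*$ in \Cref{eq:ψ_FL} corresponds to the trajectory continuing along the \emph{same} line past its point of tangency with $r = s^2/μ$, not to two different lines.
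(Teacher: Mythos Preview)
Your argument is correct: writing $L$'s inertial heading as $\chi = \varphi_L + ψ^*$, using $r\dot\varphi_L = μ\sin ψ^*$ together with \Cref{lem:ψ_FL} to get $\dot\varphi_L = s^2/r^2$, and differentiating $\sin ψ^* = s^2/(μ r)$ to obtain $\dot ψ^* = -s^2/r^2$ (off the tangency radius) cleanly yields $\dot\chi \equiv 0$; your continuity patch at the single instant where $r = s^2/μ$ is also sound, and the identification of the line as the tangent to the circle of radius $s^2/μ$ is exactly what the paper later exploits in \Cref{eq:t_L,eq:t_M}.

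As for comparison, the paper does not actually carry out a proof here: it simply states that the argument ``follows the same steps as the proof for Lemma~2 in~\cite{vonmoll2022circular}'' and omits it. That cited argument is, in spirit, the same inertial-heading computation you give (show that $ψ$ plus the polar angle of $L$ is constant along the trajectory), so your approach is the natural one and in fact supplies the details the paper leaves to the reader. Your observation that the FL-tributary law is the classical law with $μ$ replaced by $s^2/μ$ is a nice structural remark that the paper does not make explicit, and it immediately explains why the trajectory is tangent to the circle of radius $s^2/μ$ rather than $μ$.
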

\begin{proof}
    The proof follows the same steps as the proof for Lemma 2 in~\cite{vonmoll2022circular} and is thus omitted for brevity.
\end{proof}

\begin{lemma}
    \label{lem:enter_FL_tangentially}
    The FL tributaries enter the FL tangentially.
\end{lemma}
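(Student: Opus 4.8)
The plan is to show that at the point where a tributary merges onto the FL, namely $(r,\theta) = (s,\pi)$, the tributary's velocity $(\dot r, \dot\theta)$ is parallel to the FL. Since the FL is the segment $\theta = \pi$, $0 < r \le \mu$, its tangent at every one of its points is the $r$-axis, i.e. $\mathrm{d}\theta/\mathrm{d}r = 0$ along it; hence it suffices to verify that, at the contact radius $r = s$, the tributary satisfies $\dot\theta = 0$ and $\dot r \neq 0$, so that $\mathrm{d}\theta/\mathrm{d}r = \dot\theta/\dot r$ is well defined and equal to $0$ there.

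First I would substitute the terminal value $r_f = s$ into the equilibrium tributary controls of \Cref{lem:ψ_FL} and \Cref{eq:ω_FL_tributaries}. This yields $\sin\psi^* = s^2/(\mu s) = s/\mu$ at $r = s$, which is exactly the FL heading $\sin\psi_{FL}$ of \Cref{eq:focal_line_control} evaluated there, together with $\omega^* = 1 = \omega_{FL}$ of \Cref{eq:focal_line_ω}. For the sign of $\cos\psi^*$ in \Cref{eq:ψ_FL}, note that \Cref{eq:λrf_FL} forces $\lambda_{r_f} < 0$, so by \Cref{eq:ψ_FL_generic} one has $\cos\psi^* > 0$ at termination; the positive root is therefore the correct branch, giving $\cos\psi^* = \sqrt{1 - s^2/\mu^2}$ at $r = s$, which is strictly positive since $s < \mu$ on the FL.

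Next I would substitute these values into the dynamics \Cref{eq:dr,eq:dθ}, obtaining $\dot r_f = \mu\sqrt{1 - s^2/\mu^2} = \sqrt{\mu^2 - s^2} > 0$ and $\dot\theta_f = \tfrac{\mu}{s}\cdot\tfrac{s}{\mu} - 1 = 0$. Hence $\mathrm{d}\theta/\mathrm{d}r = \dot\theta_f/\dot r_f = 0$ at $r = s$, which coincides with the slope of the FL, establishing tangential entry. In fact the same computation shows that the tributary's velocity at $(s,\pi)$ equals the FL's own velocity there (from the FL dynamics $\dot r = \sqrt{\mu^2 - r^2}$ at $r = s$), so the concatenated trajectory is $C^1$ across the junction, not merely tangent.

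The only delicate point is to confirm that $\dot r$ does not \emph{also} vanish at the junction, since otherwise the tangent direction of the tributary would be undefined; this is the main (and essentially the only) obstacle. It is dispatched by the sign analysis already carried out around \Cref{eq:λr_in_terms_of_r}: $\lambda_r$, and hence $\dot r$, vanishes only at the interior radius $r = s^2/\mu < s$ and is positive on the final leg $r \in (s^2/\mu,\, s]$, in particular at $r = s$. This closes the argument.
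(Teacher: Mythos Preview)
Your proposal is correct and follows essentially the same approach as the paper: substitute the equilibrium controls \Cref{eq:ψ_FL,eq:ω_FL_tributaries} into the $\theta$-dynamics at $r=s$ to obtain $\dot{\theta}_f=0$, then observe that the FL is a line of constant $\theta$. Your additional care in verifying $\dot r_f\neq 0$ (so that the tangent direction is well defined) and in noting the $C^1$ match of velocities across the junction goes slightly beyond the paper's argument, but the core idea is identical.
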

\begin{proof}
    Evaluating $\dot{θ}$ from~\Cref{eq:dθ} at terminal time (i.e., $r = s$) and substituting in the equilibrium controls, \Cref{eq:ψ_FL,eq:ω_FL_tributaries}, gives
    \begin{equation*}
        \dot{θ}^*_f = \frac{μ}{s} \left(\frac{s^2}{μ s}\right) - 1 = 0.
    \end{equation*}
    The FL, itself, is a line of constant $θ$, hence the result holds.
\end{proof}

\subsubsection{Equilibrium Flowfield}
\label{sec:Equilibrium_Flowfield}

Now, the equilibrium heading for $L$ can be substituted into $\dot{r}$, \Cref{eq:dr}, to obtain $\dot{r}^*$.
However, the sign of $\cos ψ^*$ is not known directly except at final time (wherein $\cos ψ^*,\ \dot{r} > 0$).
Therefore, it is useful to consider the retrograde equation for $r$ (denoted with a circle instead of a dot, i.e., $\mathring{r} = -\dot{r}$) in order for the initial condition to be fully specified:
\begin{equation}
    \label{eq:ringr_FL}
    \mathring{r}^* = \pm \frac{μ}{r} \sqrt{r^2 - \frac{s^4}{μ^2}}, \qquad r(0) = s,\ \mathring{r}^*(0) < 0.
\end{equation}
As mentioned previously, the sign of $\mathring{r}^*$ is governed by the sign of $λ_r$ which starts (in retrograde time) negative and becomes positive if $r$ reaches the value $\frac{s^2}{μ}$.
Let the retrograde time be denoted by $τ$ such that $τ = 0$ corresponds to $t = t_f$.
Rewriting the above expression,
\begin{align}
    \label{eq:LABEL}
    \frac{\mathrm{d}r}{\mathrm{d}τ} &= \pm \frac{μ}{r} \sqrt{r^2 - \frac{s^4}{μ^2}} \\
    \int_{s}^r \frac{r}{\sqrt{r^2 - \frac{s^4}{μ^2}}} \mathop{\mathrm{d}r} &= \pm \int_0^τ μ \mathop{\mathrm{d}τ} \\
    \left. \sqrt{r^2 - \frac{s^4}{μ^2}}\ \right\rvert_{s}^r &= \pm μ τ \\
    \sqrt{r^2 - \frac{s^4}{μ^2}} - \sqrt{s^2 - \frac{s^4}{μ^2}} &= \pm μ τ \label{eq:r_τ_expression}
\end{align}
\begin{equation}
    \implies r(τ) = \sqrt{s^2 - 2 τ s \sqrt{μ^2 - s^2} + μ^2 τ^2} \label{eq:r_τ}
\end{equation}
Define the time when $r = \frac{s^2}{μ}$ (i.e., when $λ_r$ and $\mathring{r}$ change sign) as $\bar{τ}$; this time is obtained by solving for $τ$ in the negative version of~\Cref{eq:r_τ_expression} with $r = \frac{s^2}{μ}$:
\begin{equation}
    \label{eq:τ_bar}
    \bar{τ} = \frac{s}{μ} \sqrt{1 - \frac{s^2}{μ^2}}.
\end{equation}
Note that this time also corresponds to the time at which $L$ is is closest to the center of the lake along the FL trajectory, i.e., $\min_τ r(τ) = r(\bar{τ})$.

Similarly, for $θ$, after substituting~\Cref{eq:ψ_FL,eq:ω_FL_tributaries} into~\Cref{eq:dθ} and changing to retrograde time we have
\begin{equation}
    \label{eq:θ_ring_FL}
    \mathring{θ}^* = 1 - \frac{s^2}{r^2}, \qquad θ(0) = π.   
\end{equation}
Rewriting the above expression and substituting in $r(τ)$ from~\Cref{eq:r_τ},
\begin{align}
    \frac{\mathrm{d}θ}{\mathrm{d}τ} &= 1 - \frac{s^2}{s^2 - 2 τ s \sqrt{μ^2 - s^2} + μ^2 τ^2} \\
    \int_π^θ \mathrm{d}θ &= \int_0^τ \mathrm{d}τ - s^2 \int_0^τ \frac{1}{s^2 - 2 s τ \sqrt{μ^2 - s^2} + μ^2 τ^2}\mathop{\mathrm{d}τ} \\
    θ - π &= τ - \tan^{-1}\left( \frac{μ^2}{s^2}τ - \sqrt{\frac{μ^2}{s^2} - 1} \right) - \tan^{-1}\left( \sqrt{\frac{μ^2}{s^2} - 1} \right).
\end{align}

The following is stated in order to summarize the results of this section.
\begin{lemma}
    \label{lem:FL_flowfield}  
    The equilibrium flowfield for FL tributaries, parameterized by the entry point on the FL, $s$, is given by
    \begin{equation}
        \label{eq:FL_flowfield}
        \begin{aligned}
            r(τ;s) &= \sqrt{s^2 - 2τ s \sqrt{μ^2 - s^2} + μ^2τ^2}, \\
            θ(τ;s) &= π + τ - \tan^{-1}\left( \frac{μ^2}{s^2}τ - \sqrt{\frac{μ^2}{s^2} - 1} \right) - \tan^{-1}\left( \sqrt{\frac{μ^2}{s^2} - 1} \right).
        \end{aligned}
    \end{equation}
\end{lemma}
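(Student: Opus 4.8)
The plan is to assemble the flowfield directly from the equilibrium strategies already in hand: $L$'s heading from \Cref{lem:ψ_FL} and $M$'s control $ω^* = 1$ from \Cref{eq:ω_FL_tributaries}. Substituting these into the relative dynamics \Cref{eq:dr,eq:dθ} and passing to retrograde time $τ$ (so that $τ = 0$ is the instant of arrival on the FL, where $r = s$ and $θ = π$) yields precisely the retrograde system \Cref{eq:ringr_FL} for $\mathring{r}^*$ and \Cref{eq:θ_ring_FL} for $\mathring{θ}^*$. First I would integrate the $r$-equation by separating variables in $\mathrm{d}r/\mathrm{d}τ = \pm (μ/r)\sqrt{r^2 - s^4/μ^2}$ and integrating out from the terminal value $r = s$, which gives $\sqrt{r^2 - s^4/μ^2} - \sqrt{s^2 - s^4/μ^2} = \pm μτ$; squaring and solving the resulting quadratic in $r^2$, then discarding the nonphysical root, produces the stated closed form for $r(τ;s)$.

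The subtle point — and the step I expect to be the main obstacle — is the sign ambiguity in $\mathring{r}^*$. As observed just before \Cref{lem:ψ_FL}, $λ_r$ (hence $\dot r$) is negative at arrival and flips sign once $r$ reaches $s^2/μ$, which in retrograde time occurs at $\bar τ = (s/μ)\sqrt{1 - s^2/μ^2}$ from \Cref{eq:τ_bar}. I would show that the single expression $r(τ;s) = \sqrt{s^2 - 2τ s\sqrt{μ^2 - s^2} + μ^2 τ^2}$ is valid on both sides of $\bar τ$: the radicand is a quadratic in $τ$ whose unique vertex sits exactly at $τ = \bar τ$ with minimum value $s^4/μ^2$, so $r(\bar τ;s) = s^2/μ$, and differentiating the formula automatically returns $\mathring{r}^* < 0$ for $τ < \bar τ$ and $\mathring{r}^* > 0$ for $τ > \bar τ$. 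In other words, the two branches corresponding to the $\pm$ signs are glued at $r = s^2/μ$ continuously and with continuous (vanishing) derivative, so no piecewise definition is needed; the initial condition $r(0;s) = s$ is immediate.

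With $r(τ;s)$ available, the $θ$-component follows by direct quadrature: $\mathrm{d}θ/\mathrm{d}τ = 1 - s^2/r(τ;s)^2$, so $θ(τ;s) - π = τ - s^2\int_0^τ \mathrm{d}τ'\,\bigl(s^2 - 2sτ'\sqrt{μ^2 - s^2} + μ^2 τ'^2\bigr)^{-1}$. The denominator is an irreducible quadratic in $τ'$ (its discriminant is negative since $s < μ$), so completing the square writes it as $μ^2\bigl(τ' - s\sqrt{μ^2 - s^2}/μ^2\bigr)^2 + s^4/μ^2$, and the standard $\int \mathrm{d}u/(u^2 + a^2)$ reduction turns the integral into an inverse tangent; the prefactor works out so that the $s^2$ cancels exactly, leaving $\tan^{-1}\!\bigl(\tfrac{μ^2}{s^2}τ - \sqrt{\tfrac{μ^2}{s^2} - 1}\bigr) + \tan^{-1}\!\bigl(\sqrt{\tfrac{μ^2}{s^2} - 1}\bigr)$ once the lower limit $τ' = 0$ is applied. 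Substituting back gives the claimed $θ(τ;s)$, with $θ(0;s) = π$ verified on the spot, which completes the proof. The remaining content is routine integral bookkeeping; the only genuine care needed is the sign-gluing argument for the $r$-branch described above.
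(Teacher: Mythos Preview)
Your proposal is correct and follows essentially the same route as the paper: substitute the equilibrium controls from \Cref{lem:ψ_FL} and \Cref{eq:ω_FL_tributaries} into the retrograde dynamics, separate variables in the $r$-equation to obtain \Cref{eq:r_τ_expression} and hence \Cref{eq:r_τ}, then insert $r(τ;s)$ into the $θ$-equation and integrate the resulting quadratic-denominator integrand to an inverse tangent. Your explicit treatment of the sign ambiguity in $\mathring{r}^*$---showing that the single quadratic-in-$τ$ radicand has its vertex exactly at $\bar τ$ with minimum value $s^4/μ^2$, so the two $\pm$ branches glue smoothly there---is a useful clarification that the paper leaves implicit, but the overall argument is the same.
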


\subsubsection{Computation of the FL Entry Point}
\label{sec:Computation_of_the_FL_Entry_Point}

The equilibrium flowfield expressions derived in the previous section are useful for filling a region of the state space with equilibrium trajectories by computing $(r(τ), θ(τ))$ starting from points along the FL.
However, starting (in forward time) from a general position $(r, θ)$, the equilibrium heading of $L$ is unknown as it depends on $s$.
This section describes the process by which $s$ may be computed.

There are two possible cases depending on $L$'s initial condition: 1) $L$'s equilibrium heading has some component of towards the center of the lake and 2) $L$'s equilibrium heading has a component of velocity away from the center of the lake until she reaches the FL.
Consider Case 1.
Let the time of arrival of $L$ to the entry point of the FL, $(s, π)$ be
\begin{equation}
    \label{eq:t_L}
    t_L(s) = \frac{1}{μ} \left( \sqrt{r^2 - \frac{s^4}{μ^2}} + \sqrt{s^2 - \frac{s^4}{μ^2}} \right),
\end{equation}
which is derived based on the fact that $L$'s trajectory is a straight line in the Cartesian frame (per~\Cref{lem:FL_tributaries_straight}) and is tangent to a circle of radius $\frac{r_f^2}{μ}$.
$M$'s time of arrival to the position that is antipodal to $L$ is given by the sum of angles traversed
\begin{equation}
    \label{eq:t_M}
    t_M(s) = θ + \cos^{-1}\left( \frac{s^2}{μ r} \right) + \cos^{-1}\left( \frac{s}{μ} \right) - π.
\end{equation}
Define the function $δ(s) = t_L(s) - t_M(s)$ which is the difference of the agents' respective times of arrival.
The equilibrium entry point onto the FL is thus the smallest possible root of this function, i.e.,
\begin{equation}
    \label{eq:determine_rf}
    s^* = \min s \qquad \text{ s.t. } δ(s) = 0,\ s \in \left( 0,\ μ \right].
\end{equation}
The solution may be obtained numerically as the above expression does not admit a closed-form solution.

Case 2 is similar to Case 1 but with $t_L$ and $t_M$ given, respectively, by
\begin{align}
    t_L(s) &= \frac{1}{μ}\left( \sqrt{s^2 - \frac{s^4}{μ^2}} - \sqrt{r^2 - \frac{s^4}{μ^2}} \right), \label{eq:t_L_case2} \\
    t_M(s) &= θ - \cos^{-1}\left( \frac{s^2}{μ r} \right) + \cos^{-1}\left( \frac{s}{μ} \right) - π .\label{eq:t_M_case2}
\end{align}
In lieu of a more sophisticated method with which to determine whether the initial condition, $(r, θ)$, is in Case 1 or Case 2, the former should be assumed first.
If no solution to~\Cref{eq:determine_rf} can be found, then Case 2 should be assumed.

\subsection{Universal Line}
\label{sec:Universal_Line}

\begin{proposition}
    \label{prop:UL}
    There is a Universal Line (UL) given by
    \begin{equation}
        \label{eq:UL_definition}
        \mathcal{U} = \left\{ (r, θ)~\mid~0 \le r \le 1,\ θ = 0 \right\},
    \end{equation}
    wherein $L$'s equilibrium control strategy is to head directly to the center of the lake and $M$ does not move, i.e.,
    \begin{equation}
        \label{eq:ψ_on_UL}
        \cos ψ_{UL} = -1, \qquad ω_{UL} = 0.
    \end{equation}
\end{proposition}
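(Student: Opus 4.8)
The plan is to follow the template of the proof of \Cref{prop:focal}: (i) confirm the prescribed controls are consistent with the necessary conditions and keep the state on $\mathcal{U}$; (ii) give a game-theoretic argument that they form a saddle point; and (iii) record the resulting Value along $\mathcal{U}$ so the line can later be glued to the FL at the origin.

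For (i): substituting $\cosψ_{UL}=-1$ (hence $\sinψ_{UL}=0$) and $ω_{UL}=0$ into \Cref{eq:dr,eq:dθ} yields $\dot{r}=-μ<0$ and $\dot{θ}=0$, so the state slides along $θ=0$ toward the center, reaching $r=0$ in time $r/μ$. Consistency with the first-order conditions for the min-max-time Hamiltonian \Cref{eq:H_FL_generic} then forces $\mathcal{U}$ to be a singular arc for $M$: $ω^*=0$ can maximize $\H$ over $ω\in[-1,1]$ only if $λ_θ=0$, and this is self-sustaining because \Cref{eq:λθdot_FL} gives $\dot{λ}_θ=0$ and then \Cref{eq:λrdot_FL_generic} gives $\dot{λ}_r=0$, so $λ_r$ is constant; imposing $\H=λ_rμ\cosψ^*+1=0$ with $\cosψ^*=-1$ pins $λ_r=1/μ>0$, which is exactly what makes $\cosψ_{UL}=-1$ the minimizer of $\H$ over $ψ$. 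Integrating along the line gives the Value $V_{\mathcal U}(r)=\tfrac{r}{μ}+C$; pushing the trajectory to $r=0$ deposits $L$ at the base of the FL with entry value $s=0$, so by \Cref{eq:h} the residual time is $Φ(0)=\tfrac{π}{2}$ and $C=\tfrac{π}{2}$.

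For (ii), which is the crux: $L$'s side of the argument echoes \Cref{prop:focal}—her only lever against $M$ is her relative angular rate, whose magnitude $μ/r$ increases monotonically as $r\to 0$, so any angular separation she ultimately needs is purchased most cheaply deep inside the lake; hence deferring all turning and driving $r$ straight to zero is optimal. For $M$: at $θ=0$ the players are angularly coincident and the reduced dynamics are symmetric under $(θ,ω)\mapsto(-θ,-ω)$, so $ω=0$ is the unique direction-neutral action and $\mathcal{U}$ is where optimal trajectories from both sides merge—a Universal Line. Moreover, any deviation $ω\neq0$ is self-defeating: if $M$ picks such an $ω$ while $L$ keeps $\sinψ=0$, then $\dot{r}=-μ$ is unchanged but $\dot{θ}=-ω$, so after time $t$ the state is $(r-μt,|ω|t)$ rather than $(r-μt,0)$, and since the Value decreases as $θ$ moves from $0$ toward $π$ below the barrier, $L$'s time-to-go is strictly smaller, contradicting optimality of the deviation. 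I expect the two genuinely delicate points to be: the standard singular-control informational caveat (in principle $L$ must know $M$'s instantaneous $ω$, but $M$ chattering is exploitable exactly as argued in \Cref{prop:focal}); and the degenerate origin, where one must verify that as $r\to0$ the unbounded angular authority lets $L$ reposition from $θ=0$ to $θ=π$ at vanishing cost, so the hand-off from $\mathcal{U}$ to $\mathcal{F}$ is continuous and adds no time.
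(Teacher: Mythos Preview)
Your proposal is correct and follows the same heuristic, game-theoretic spirit as the paper, but it is considerably more thorough: the paper's own proof is three sentences of intuition (essentially your part~(ii) argument that any nonzero $ω$ only helps $L$ and that $L$ should pass through the origin), whereas you additionally verify consistency with the first-order necessary conditions and compute the Value along $\mathcal{U}$. Those extra pieces---the adjoint identification $λ_θ=0$, $λ_r=1/μ$ and the time $r/μ+\tfrac{π}{2}$---are exactly what the paper establishes in the \emph{subsequent} subsection on UL tributaries (cf.\ \cref{eq:λrf_UL,eq:ν_UL,eq:Value_solution}), so you have effectively folded that analysis into the proposition itself. One small framing difference: the paper justifies $ω_{UL}=0$ by appealing to $M$'s objective in the \emph{original} angular-separation game, while you argue directly in the auxiliary min-max-time game via monotonicity of the Value in $θ$; your version is arguably cleaner for this context, though both are heuristic and rely on a monotonicity that is only verified later.
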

\begin{proof}
    When $θ = 0$, $M$ has no incentive to move the state of the system to some non-zero $θ$ since doing so increases $L$'s angular separation (which is, ultimately, the thing that $M$ seeks to reduce).
    If $L$ had an angular component of velocity then $θ$ would immediately become non-zero.
    When $θ = 0$, the easiest way for $L$ to drive $θ \to π$ is to pass through the origin.
\end{proof}

Just as in the section on obtaining equilibrium controls for FL tributaries, the game is reformulated as a game which begins from a general initial condition and ends on the UL.

\subsubsection{Equilibrium Heading for UL Tributaries}
\label{sec:Equilibrium_Control_for_UL_Tributaries}

The terminal manifold is the set of states where $θ = 0$, i.e.,
\begin{equation}
    \label{eq:M_UL}
    \mathcal{M} = \left\{ (r, θ) \mid 0 < r \le 1,\ θ = 0 \right\},
\end{equation}
which is also the zero-level set of the function
\begin{equation}
    \label{eq:ϕ_UL}
    ϕ(r, θ) = θ.
\end{equation}
The terminal cost is the time for $L$ to reach the origin along the UL under the proposed UL strategy, \Cref{eq:ψ_on_UL}:
\begin{equation}
    \label{eq:h_UL}
    Φ(r_f, θ_f) = \frac{r_f}{μ}.
\end{equation}
In principle, one may consider the total time to finish out the original game by adding in the time spent along the FL, starting from $(0, π)$ and going to $(μ, π)$, however that is not necessary as that time will be the same for all UL tributaries.
The performance functional is the sum of the time taken to reach the UL and then reach the origin (i.e., \Cref{eq:Js}).
The Hamiltonian is the same as in~\Cref{eq:H_FL_generic}.
Similarly as before, the equilibrium Hamiltonian is zero for all time, and the equilibrium heading is given by~\Cref{eq:ψ_FL_generic} resulting in~\Cref{eq:simpler_FL_denom}.
The terminal adjoint values are
\begin{align}
    λ_{r_f} &= \frac{\partial Φ}{\partial r_f} + ν \frac{\partial ϕ}{\partial r_f} = \frac{1}{μ} \label{eq:λrf_UL} \\
    λ_{θ_f} &= \frac{\partial Φ}{\partial θ_f} + ν \frac{\partial ϕ}{\partial θ_f} = ν. \label{eq:λθf_UL}
\end{align}
Evaluating~\Cref{eq:simpler_FL_denom} at final time results in
\begin{equation}
    \label{eq:ν_UL}
    \sqrt{\frac{1}{μ^2} + \frac{ν^2}{r_f^2}} = \frac{1-ν}{μ}.
\end{equation}
Solving this expression, algebraically, for $ν$ yields $ν = \frac{2 r_f^2}{r_f^2 - μ^2}$ which goes to infinity as $r_f \to μ$; additionally the sign of $ν$ changes depending on whether $r_f \gtrless μ$.
Fortunately, the solution $ν = 0$ is valid for all $r_f, μ \in \left[ 0, 1 \right]$.

\begin{lemma}
    The equilibrium heading for $L$ along UL tributaries is given by
    \begin{equation}
        \label{eq:ψ_UL}
        \cos ψ = -1.
    \end{equation}
\end{lemma}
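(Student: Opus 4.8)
The plan is to substitute the adjoint value $\nu = 0$ (just selected as the valid root of \Cref{eq:ν_UL}) into the generic equilibrium-heading conditions and then resolve the remaining sign ambiguity using the terminal adjoint data and the adjoint dynamics. Recall that $\psi^*$ minimizes $\mathcal{H}$ over $\psi$, which (since $\lambda_\theta = \nu$) forces the unit vector $\begin{bmatrix}\cos\psi^* & \sin\psi^*\end{bmatrix}$ to be antiparallel to $\begin{bmatrix}\lambda_r & \nu/r\end{bmatrix}$, i.e.\ \Cref{eq:ψ_FL_generic}. Setting $\nu = 0$ collapses this to $\sin\psi^* = 0$ and $\cos\psi^* = -\operatorname{sign}(\lambda_r)$, provided $\lambda_r \neq 0$.

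Next I would pin down the sign of $\lambda_r$ along the whole tributary. Since $\dot\lambda_\theta = 0$ from \Cref{eq:λθdot_FL} and $\lambda_{\theta_f} = \nu = 0$ from \Cref{eq:λθf_UL}, we have $\lambda_\theta \equiv 0$ on $[0, t_f]$. Feeding this into \Cref{eq:λrdot_FL_generic} gives $\dot\lambda_r = -\lambda_\theta \tfrac{\mu}{r^2}\sin\psi^* = 0$, so $\lambda_r$ is constant along the tributary; its value is fixed by the terminal condition $\lambda_{r_f} = 1/\mu > 0$ of \Cref{eq:λrf_UL}. Hence $\lambda_r \equiv 1/\mu > 0$, in particular $\lambda_r \neq 0$, so the previous step applies and yields $\cos\psi^* = -1$ (with $\sin\psi^* = 0$ consistently) for all $t$. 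As a sanity check, this gives $\dot r = \mu\cos\psi^* = -\mu < 0$, so $L$ indeed moves straight toward the lake center, matching the UL strategy of \Cref{prop:UL}.

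There is no real computational obstacle here; the only delicate point is the degeneracy at $\nu = 0$. One must note that (i) the heading formula stays well-posed because $\lambda_r = 1/\mu \neq 0$ breaks what would otherwise be a $0/0$ expression, and (ii) the choice $\nu = 0$ is the legitimate one, as already argued (the algebraic alternative $\nu = 2r_f^2/(r_f^2-\mu^2)$ is singular at $r_f = \mu$ and sign-inconsistent, whereas $\nu = 0$ is admissible for all $r_f,\mu \in [0,1]$). It is also worth remarking that with $\lambda_\theta \equiv 0$ the control $\omega$ drops out of $\mathcal{H}$ entirely, so $M$'s action is immaterial on UL tributaries — consistent with \Cref{eq:ω_FL_generic} being indeterminate when $\nu = 0$ and with $\omega_{UL} = 0$.
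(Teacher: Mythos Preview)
Your argument is correct and is, in fact, a more careful execution of what the paper only gestures at. The paper's own proof simply says ``the result follows from the preceding analysis'' and then offers an intuitive justification---since $L$ must ultimately reach the lake center, the $\theta$ coordinate is irrelevant en route, so the time-minimizing path is radial ($\psi=\pi$). You instead close the loop on the variational machinery: with $\nu=0$ you show $\lambda_\theta\equiv 0$, hence $\dot\lambda_r=0$, hence $\lambda_r\equiv 1/\mu>0$, and then \Cref{eq:ψ_FL_generic} delivers $\cos\psi^*=-1$ unambiguously. The paper's shortcut is quicker and appeals directly to the geometry of the problem; your route is more self-contained within the Pontryagin framework and handles the $0/0$ degeneracy in the heading formula explicitly, which the paper leaves implicit.
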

\begin{proof}
    The result follows from the preceding analysis.
    Ultimately, $L$ must end at the center of the lake and thus the $θ$ state bears no importance while $L$ is \textit{en route}.
    Therefore, the fastest way for $L$ to reach the center of the lake is a straight line path, which is achieved with $ψ = π$.
\end{proof}
Note that, since $ν = 0$, $M$'s control disappears from the Hamiltonian in~\Cref{eq:H_FL_generic} and therefore every value $ω \in \left[ -1, 1 \right]$ is equally optimal.

\subsubsection{Equilibrium Flowfield}
\label{sec:Equilibrium_Flowfield_UL}

In contrast to the FL tributaries, the flowfield for the UL tributaries is simple.
Since $M$'s equilibrium control is undefined on the UL tributaries, we adopt a value of $ω^* = 1$.
\begin{lemma}
    \label{lem:UL_flowfield}
    The equilibrium flowfield for UL tributaries is given by
    \begin{equation}
        \label{eq:UL_flowfield}
        \begin{aligned}
            r(τ) &= μ τ,\qquad &&r(0) \in \left[ 0, 1 \right)\\
            θ(τ) &= τ, \qquad &&θ(0) = 0.
        \end{aligned}
    \end{equation}
\end{lemma}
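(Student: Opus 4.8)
The plan is to obtain the UL-tributary flowfield in exactly the way the FL-tributary flowfield was obtained in \Cref{sec:Equilibrium_Flowfield}: substitute the equilibrium controls just established into the state dynamics \Cref{eq:dr,eq:dθ} and integrate in retrograde time, starting from the terminal manifold $\mathcal{M}$ of \Cref{eq:M_UL}. This should be far shorter than in the FL case because $L$'s equilibrium heading is the \emph{constant} $\psi = \pi$, with no dependence on an entry parameter, and $M$'s control has already been fixed to the adopted value $\omega^{*} = 1$.

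Concretely, I would proceed in three short steps. First, substitute $\cos\psi = -1$ and $\sin\psi = 0$ from \Cref{eq:ψ_UL}, together with $\omega = 1$, into \Cref{eq:dr,eq:dθ}; the dynamics collapse to $\dot r = -\mu$ and $\dot\theta = -1$ (consistent with $L$ running a straight radial segment in the inertial frame while $M$ sweeps the angle at unit rate). Second, pass to retrograde time $\tau$ with $\tau = 0$ at the instant the state meets $\mathcal{M}$, so that $\mathrm{d}r/\mathrm{d}\tau = \mu$ and $\mathrm{d}\theta/\mathrm{d}\tau = 1$. Third, integrate these decoupled linear equations: the defining condition $\theta = 0$ on $\mathcal{M}$ gives $\theta(\tau) = \tau$, while setting $r$ at the terminal manifold equal to the (free) UL-entry radius $r(0) \in [0,1)$ gives $r(\tau) = r(0) + \mu\tau$, which is \Cref{eq:UL_flowfield}; the open right endpoint merely records that $r = 1$ is the shore, where no tributary is needed.

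There is essentially no analytical obstacle in the integration; the only point deserving care is a consistency remark rather than a computation. Since $\nu = 0$ deletes $\omega$ from the Hamiltonian \Cref{eq:H_FL_generic}, every admissible $\omega$ is equally optimal, so the flowfield above is only one representative of a family of equilibrium flows. I would therefore pair the lemma with the observation that all members of this family incur the same equilibrium cost of reaching $E$ — namely $r/\mu + \pi/2$, the first term being the (rotation-independent) time for $L$ to run straight to the centre, recorded by the terminal cost \Cref{eq:h_UL}, and the second being the time spent on the FL from the origin to $(\mu,\pi)$, which is constant across all UL tributaries — and that each such trajectory stays in $\mathcal{R}$ and joins the Focal Line $\mathcal{F}$ of \Cref{eq:focal_line} as $r \to 0$. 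Thus the part of the argument that actually carries content is not the one-line integration but the verification that the indeterminacy of $M$'s control leaves the value unchanged.
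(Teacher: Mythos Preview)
Your approach is correct and is exactly the route the paper implicitly has in mind: the lemma is stated in the paper without proof, and the natural derivation is the one you give --- substitute the constant equilibrium controls $\cos\psi^{*}=-1$, $\sin\psi^{*}=0$, $\omega^{*}=1$ into \Cref{eq:dr,eq:dθ}, pass to retrograde time with the terminal data from \Cref{eq:M_UL}, and integrate the resulting decoupled linear ODEs. Your additional remark that the indeterminacy of $M$'s control leaves the cost $r/\mu+\pi/2$ unchanged (cf.\ \Cref{eq:Value_solution}) is a useful consistency observation.

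One small point worth flagging: your integration correctly produces $r(\tau)=r(0)+\mu\tau$, whereas the lemma as printed reads $r(\tau)=\mu\tau$ together with the side condition $r(0)\in[0,1)$. Taken literally, $r(\tau)=\mu\tau$ forces $r(0)=0$ and describes only the single tributary lying on the partition $\mathcal{P}$ of \Cref{lem:FL_UL_boundary}; it cannot fill the region $\theta\le r/\mu$ that the paper immediately deduces from the lemma. Your formula $r(\tau)=r(0)+\mu\tau$ is the one that actually yields that region (since then $\theta=\tau=(r-r(0))/\mu\le r/\mu$, with equality exactly when $r(0)=0$). So your derivation is not merely ``the same as'' \Cref{eq:UL_flowfield} --- it quietly repairs what appears to be a dropped $r(0)$ term in the printed statement.
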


A direct result of~\Cref{lem:UL_flowfield} is that UL tributaries only exist when $θ \le \frac{r}{μ}$.
The interpretation is that UL tributaries exist when $M$ is close enough to $L$ so as to be able to close their angular separation prior to the latter reaching the center of the lake.

\subsection{Full Solution}
\label{sec:Full_Solution}

The following result pieces together the two types of trajectories covered in the previous section.

\begin{lemma}
    \label{lem:FL_UL_boundary}
    The line segment
    \begin{equation}
        \label{eq:partition}
        \mathcal{P} = \left\{ (r, θ)~\mid~0 \le r \le 1,\ θ = \frac{r}{μ} \right\}
    \end{equation}
    partitions the state space into two regions: one where FL tributaries exist and are optimal and one where UL tributaries exist and are optimal.
    That is, the two regions are mutually exclusive.
\end{lemma}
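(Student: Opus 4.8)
The plan is to prove that, within the state space, the FL tributaries and the UL tributaries live on opposite sides of $\mathcal{P}$ and touch it only as limiting members, so that $\mathcal{P}$ is exactly their common boundary. Throughout I would track the scalar $g(τ;s) := θ(τ;s) - r(τ;s)/μ$ along an FL tributary from \Cref{lem:FL_flowfield}: its sign records which side of $\mathcal{P}$ the tributary lies on, and the whole lemma reduces to showing $g>0$ along every genuine tributary. The UL side is already done for us: from \Cref{lem:UL_flowfield} and the discussion following it, UL tributaries only reach states with $θ \le r/μ$, i.e.\ the closed region on and below $\mathcal{P}$. Hence it suffices to show that every FL tributary stays strictly in $\{θ > r/μ\}$, together with the (routine) fact that $\mathcal{P}$ is the limit of the FL family as the entry radius $s\to 0^{+}$.

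First I would dispatch the limiting statement: letting $s\to 0^{+}$ in \Cref{eq:FL_flowfield}, one has $r(τ;s)\to μτ$, and since $μ^{2}/s^{2}\to\infty$ forces both arctangents to $π/2$, also $θ(τ;s)\to τ$; thus the $s\to0^{+}$ limit of the FL tributaries is the curve $θ = r/μ$, which is simultaneously the $r_f \to 0$ member of the UL family. This fixes $\mathcal{P}$ as the interface and leaves only the confinement of the genuine tributaries, $s\in(0,μ]$.

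The technical core is the claim that $g(τ;s)>0$ for all $s\in(0,μ]$ and all $τ$ in a tributary's range. The extreme values are harmless: $g(0;s) = π - s/μ \ge π - 1 > 0$, and since the arctangents in \Cref{eq:FL_flowfield} stay bounded while $r(τ;s)/μ - τ \to -\bar τ$, one gets $g(τ;s) \to \sin^{-1}(s/μ) + \bar τ > 0$ as $τ\to\infty$, with $\bar τ$ as in \Cref{eq:τ_bar}. What remains is unimodality in $τ$: differentiating $g$ with $\mathring r = (μ^{2}τ - s\sqrt{μ^{2}-s^{2}})/r$ and $\mathring θ = 1 - s^{2}/r^{2}$ (from \Cref{eq:ringr_FL,eq:θ_ring_FL}) yields $r^{2}\,\mathrm{d}g/\mathrm{d}τ = h(μτ)$, where $h(u) = u(u-2a) - r(u)(u-a)$ with $a = s\sqrt{μ^{2}-s^{2}}/μ$ and $r(u) = \sqrt{(u-a)^{2} + s^{4}/μ^{2}}$ (the value $u=a$ being the closest-approach instant $\bar τ$). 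One checks that $h$ is strictly decreasing on $[0,a)$ with $h(0) = as > 0 > -a^{2} = h(a)$, and that $h(u) < -a^{2} < 0$ for every $u > a$ because there $r(u) > u-a$. Hence $g$ rises to a single maximum and then decreases monotonically to its positive limit, so $g(τ;s) > 0$ everywhere, and no FL tributary reaches $\mathcal{P}$ or anything below it.

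Combining these, the UL tributaries occupy $\{θ \le r/μ\}$ and the FL tributaries occupy a subset of $\{θ > r/μ\}$ (indeed, by continuity of the flowfield in $(τ,s)$ together with the construction in \S\ref{sec:Computation_of_the_FL_Entry_Point}, all of it below the barrier $B$), the two families meeting only along $\mathcal{P}$; this is precisely the asserted partition and its mutual exclusivity. I expect the main obstacle to be the uniform sign control of $\mathrm{d}g/\mathrm{d}τ$ in the previous paragraph — equivalently, pinning down the unimodality of $g$ in $τ$ for all $s$ — since everything else is either already established or follows by continuity. A softer route that avoids that computation is to invoke the non-crossing of equilibrium trajectories: because $\mathcal{P}$ is both a UL tributary and the $s\to0^{+}$ limit of the FL tributaries, any FL tributary with $s>0$ must remain on the side of $\mathcal{P}$ containing its common initial locus $θ = π$, namely $\{θ > r/μ\}$; its drawback is that one must first know the FL tributaries foliate their region, which the explicit $g$-analysis supplies directly.
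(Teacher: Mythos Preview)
Your proposal is correct and, in its core step, more rigorous than the paper's own argument. Both you and the paper handle the UL side identically (via \Cref{lem:UL_flowfield}) and both identify the $s\to 0^{+}$ limit of the FL family with $\mathcal{P}$ by letting $r_f\to 0$ in the retrograde dynamics. The divergence is in how the FL tributaries are kept above $\mathcal{P}$: the paper simply notes that the limiting tributary is parallel to $\mathcal{P}$ and then \emph{defers both the non-crossing of FL tributaries and the filling of the region to a graphical check}; your scalar $g(τ;s)=θ-r/μ$ and its unimodality analysis supply an actual proof that every tributary with $s\in(0,μ]$ stays in $\{θ>r/μ\}$. I checked your derivative computation: with $u=μτ$, $a=s\sqrt{μ^{2}-s^{2}}/μ$, and $r(u)=\sqrt{(u-a)^{2}+s^{4}/μ^{2}}$, one indeed gets $r^{2}g'=h(u)=u(u-2a)-r(u)(u-a)$, and $h'(u)=2(u-a)-\bigl(2(u-a)^{2}+s^{4}/μ^{2}\bigr)/r<0$ on $[0,a)$, while $h(u)<-a^{2}$ for $u>a$ follows from $r(u)>u-a$; together with $g(0;s)=π-s/μ>0$ and $g(τ;s)\to\sin^{-1}(s/μ)+\bar τ>0$, this yields $g>0$ everywhere. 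What you call the ``softer route'' at the end—limit tributary plus non-crossing—is exactly the paper's route, with the same caveat you flag: it presupposes the foliation, which neither you nor the paper fully establishes (you appeal to continuity and \S\ref{sec:Computation_of_the_FL_Entry_Point}; the paper appeals to a figure). In short, your primary argument strictly strengthens the paper's proof on the mutual-exclusivity half, and matches it on the remaining ``filling'' claim.
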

\begin{proof}
    It was shown previously, in~\Cref{lem:UL_flowfield}, that UL tributaries exist below $\mathcal{P}$.
    The remainder of the proof focuses on showing that FL tributaries exist above $\mathcal{P}$, that is, for $θ > \frac{μ}{r}$.
    Consider the FL tributary for which $r_f \to 0$; this is the most limiting case for FL tributaries as the other endpoint of $\mathcal{F}$ (where $r = μ$) corresponds to already being at the desired point (i.e., the trajectory is the single point $(r, θ) = (μ, π)$).
    From~\Cref{eq:ringr_FL,eq:θ_ring_FL} we have
    \begin{align*}
        \lim_{r_f \to 0} \mathring{r}^*\Big|_{r>0} &= + \frac{μ}{r} \sqrt{r^2 - \frac{0^4}{μ}} = μ \\
        \lim_{r_f \to 0} \mathring{θ}^*\Big|_{r>0} &= 1 - \frac{0^2}{r^2} = 1
    \end{align*}
    These retrograde dynamics result in a line that is parallel to the partition $\mathcal{P}$ and lies arbitrarily close to it since $0 < r_f \ll 1$.
    Two remaining properties are needed in order for the result to hold: 1) that the FL tributaries do not cross one another (and thus no FL tributary crosses below $\mathcal{P}$ as a result of the above analysis), and 2) that the FL tributaries fill the region of the state space above $\mathcal{P}$.
    Both of these properties will be verified, graphically, with an example.
\end{proof}

Based on all of the preceding results of this section, the following theorem summarizes the solution of the min-max time game.

\begin{theorem}
    The solution to the zero-sum differential game of time to reach the antipodal point $E$ is given by the following equilibrium control strategies and associated Value function.
    \begin{align}
        \left( \cos ψ^*,\ \sin ψ^* \right) &= \begin{cases}
            \left( \sqrt{1 - \frac{r^2}{μ^2}},\ \frac{r}{μ} \right) & \text{ if } θ = π, \\
            \left( -1,\ 0 \right) & \text{ if } θ \le \frac{r}{μ}, \\
            \left( \pm \sqrt{1 - \frac{r_f^4}{μ^2r^2}} ,\ \frac{r_f^2}{μr} \right) & \text{ otherwise. }
        \end{cases} \label{eq:ψ_equilibrium} \\
            ω^* &= \begin{cases}
                1 &\text{ if } θ > \frac{r}{μ} \\
                0 &\text{ if } θ = 0 \\
                \text{undef.} & \text { otherwise, }
            \end{cases} \label{eq:ω_equilibrium} \\
                t_f^* &= \begin{cases}
            \frac{π}{2} - \sin^{-1}\left( \frac{r}{μ} \right) & \text{ if } θ = π, \\
            \frac{π}{2} + \frac{r}{μ} & \text{ if } θ \le \frac{r}{μ}, \\
            \frac{π}{2} - \sin^{-1}\left( \frac{s}{μ} \right) + t_L(s), & \text{ otherwise, }
        \end{cases} \label{eq:Value_solution}
    \end{align}
    where $s$ is the solution of~\Cref{eq:determine_rf} and $t_L(s)$ is given by~\Cref{eq:t_L} or~\Cref{eq:t_L_case2} depending on which case applies to the current state as described in~\Cref{sec:Computation_of_the_FL_Entry_Point}.
    Note that the corresponding case determines the sign of $\cos ψ^*$ as well.
\end{theorem}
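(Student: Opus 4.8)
The plan is to assemble the theorem from the constituent results of \Cref{sec:Focal_Line,sec:Universal_Line} and then to verify that the pieced-together synthesis is a genuine saddle point. The first step is to partition the region of interest (the part of $\mathcal{R}$ lying below the barrier $B$) by the segment $\mathcal{P}$ of \Cref{lem:FL_UL_boundary}: the closed set $\{\theta \le r/\mu\}$ is to be covered by the UL and its tributaries, the set $\{\theta > r/\mu\}$ by the FL and its tributaries, and $\mathcal{P}$ itself is their common boundary. On each piece the control laws asserted in \Cref{eq:ψ_equilibrium,eq:ω_equilibrium} are precisely those already derived --- \Cref{prop:focal} on $\mathcal{F}$, \Cref{prop:UL} on $\mathcal{U}$, \Cref{lem:ψ_FL} and the lemmas following it for the FL tributaries, the UL-tributary lemma of \Cref{sec:Equilibrium_Control_for_UL_Tributaries} for the UL tributaries --- so the substantive work is (i) showing the prescribed flowfields actually foliate their regions, and (ii) showing the candidate $t_f^*$ is the Value.

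For (i) the UL side is immediate from \Cref{lem:UL_flowfield}: the rays $r = \mu\tau,\ \theta = \tau$ manifestly sweep out $\{\theta\le r/\mu\}$ without self-intersection. For the FL side I would close the two gaps flagged at the end of the proof of \Cref{lem:FL_UL_boundary}, namely that distinct FL tributaries do not cross and that they fill $\{\theta > r/\mu\}$ up to $B$. I would do this by studying the flowfield map $(\tau, s) \mapsto (r(\tau;s), \theta(\tau;s))$ of \Cref{eq:FL_flowfield}: compute its Jacobian, show it is nonvanishing on the relevant domain (so tributaries are locally disjoint), note the limiting tributary $s\to 0$ coincides with $\mathcal{P}$ (already done in \Cref{lem:FL_UL_boundary}) while $s\to\mu$ degenerates to the single point $E$, and invoke continuity in $s$ to conclude the covering. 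This is also where the semi-analytic character enters: since the entry point $s$ is defined only implicitly through $\delta(s)=0$ in \Cref{eq:determine_rf}, one needs the implicit function theorem to guarantee that $s^\ast$ depends continuously and piecewise smoothly on $(r,\theta)$, and to identify the Case 1 / Case 2 dichotomy of \Cref{sec:Computation_of_the_FL_Entry_Point} with the two sides of the locus $r = s^2/\mu$ across which $\lambda_r$ --- and hence $\cos\psi^\ast$ and $\mathring r$ --- changes sign (as already observed below \Cref{eq:λr_in_terms_of_r}).

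For (ii) I would run the standard sufficiency argument. Let $W(r,\theta)$ be the piecewise function on the right-hand side of \Cref{eq:Value_solution}, the FL-tributary branch being $\Phi(s^\ast) + t_L(s^\ast)$ with $s^\ast = s^\ast(r,\theta)$. One checks that $W$ is continuous across $\mathcal{P}$ and across the sign-change locus, is $C^1$ on the interior of each region, and satisfies the Hamilton--Jacobi--Isaacs equation $\min_\omega\max_\psi \H = 0$ there with $\H$ from \Cref{eq:H_FL_generic} and the adjoints taken as $\lambda_r = \partial W/\partial r$, $\lambda_\theta = \partial W/\partial\theta$; by the way the adjoints were constructed in \Cref{eq:ψ_FL_generic,eq:ω_FL_generic} this reduces to re-deriving \Cref{eq:simpler_FL_denom} together with the boundary matches $W|_{\mathcal{F}} = \tfrac{\pi}{2} - \sin^{-1}(r/\mu)$ and $W|_{\mathcal{U}} = \tfrac{\pi}{2} + r/\mu$. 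The usual verification theorem then upgrades $W$ to the game Value: the trajectory generated by $(\psi^\ast,\omega^\ast)$ attains $W$, any unilateral deviation by $L$ cannot decrease the realized cost, and any unilateral deviation by $M$ cannot increase it (the latter being trivial wherever $\nu = 0$ and $M$'s control has dropped out of $\H$).

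The step I expect to be the main obstacle is (i) --- specifically, the rigorous proof that the FL tributaries tile $\{\theta > r/\mu\}$ and are pairwise disjoint, which the paper itself only promises to verify ``graphically, with an example.'' Establishing non-intersection and surjectivity of the flowfield map uniformly across the Case 1 / Case 2 branches, and in particular at the seam $\mathcal{P}$ where $s^\ast\to 0$ and $\bar\tau\to 0$, is delicate precisely because $s^\ast$ is only an implicitly defined root of $\delta$; everything else amounts to bookkeeping over the already-established lemmas plus a routine HJI verification.
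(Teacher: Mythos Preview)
Your proposal is correct and follows the same organizational logic as the paper: partition the sub-barrier region by $\mathcal{P}$, read off the control laws on each piece from the propositions and lemmas of \Cref{sec:Focal_Line,sec:Universal_Line}, and invoke the HJI for verification. In fact you go well beyond what the paper actually does. The paper supplies no proof of the theorem at all; it is stated as a summary of the preceding subsections, and the sole justification is the remark that the HJI holds ``by construction'' since the controls were extracted from the Hamiltonian. The two substantive tasks you outline --- a Jacobian/implicit-function-theorem argument that the FL tributaries foliate $\{\theta > r/\mu\}$ without crossings, and a formal sufficiency argument upgrading the candidate $W$ to the Value --- are precisely the gaps the paper leaves open (the foliation property is explicitly deferred to graphical inspection at the end of the proof of \Cref{lem:FL_UL_boundary}). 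So your plan is not merely equivalent to the paper's argument but a genuine completion of it. One minor slip: in this min-time game $L$ minimizes $\H$ and $M$ maximizes it, so the HJI reads $\max_{\omega}\min_{\psi}\H = 0$ rather than $\min_{\omega}\max_{\psi}\H = 0$; this is inconsequential here since the Hamiltonian is separable in the two controls and the Isaacs condition holds.
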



\begin{remark}
    One may verify that the equilibrium control strategies satisfy the Hamilton-Jacobi-Isaacs (HJI) equation everywhere via direct substitution.
    However, this is true by construction since, in this case, the Hamiltonian is equivalent to the HJI and the control strategies are derived directly from the former.
\end{remark}

\Cref{fig:full} shows the relative state space filled with equilibrium trajectories.
Solutions to the classical game (i.e., the min-max angular separation when $L$ reaches $r=1$) exist above the barrier, $B$.
It is assumed that $L$ would utilize the classical strategy to exit the lake, otherwise, she should swim to $E$ as quickly as possible and then exit the lake along the barrier, $B$.
\begin{figure}[htpb]
    \centering
    \includegraphics[width=0.95\textwidth]{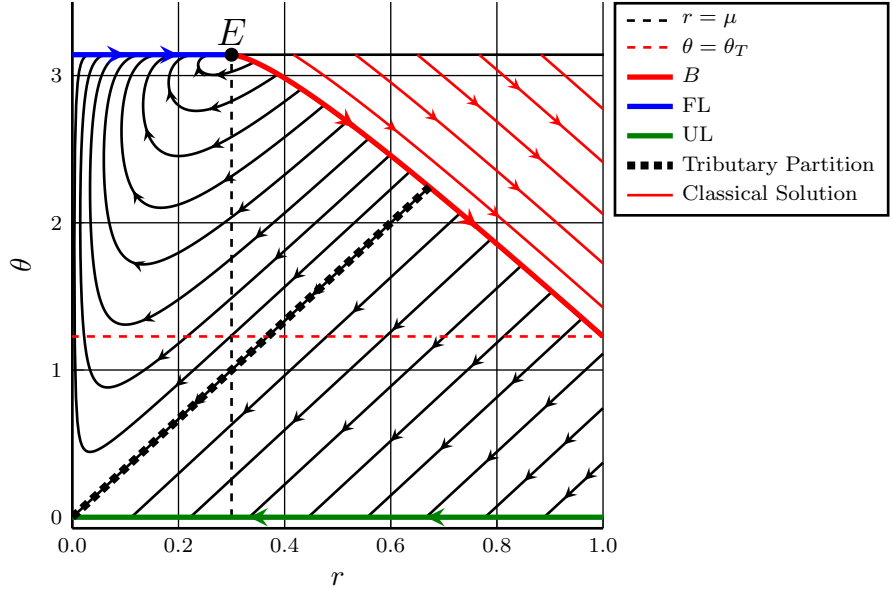}
    \caption{Equilibrium trajectories of the complete Lady in the Lake game with $μ = 0.3$.}
    \label{fig:full}
\end{figure}

\begin{figure}[htpb]
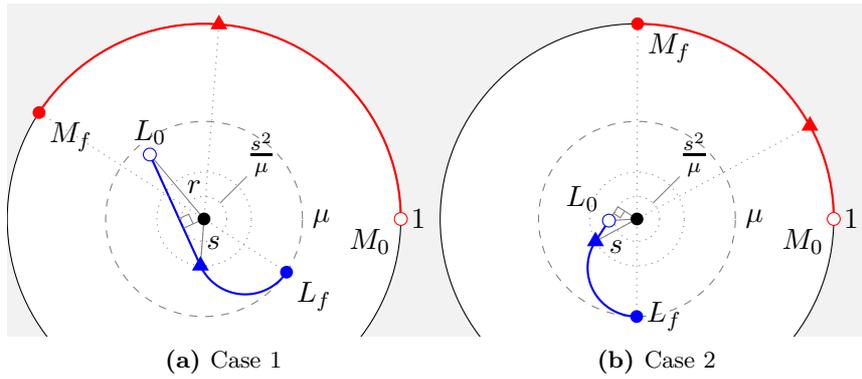

    \centering
    \subfloat[Case 1]{\includegraphics[width=0.45\textwidth]{FL-tributary.tikz}}
    \subfloat[Case 2]{\includegraphics[width=0.45\textwidth]{FL-tributary2.tikz}}
    \caption{
        Focal Line trajectories starting from the tributaries in the non-rotating Cartesian coordinate system.
        In (a), $L$ initially heads towards the tangent of the circle of radius $\frac{s^2}{μ}$ (Case 1), while in (b) $L$ only heads away from the tangent (Case 2).
        Open markers indicate initial positions, triangles designate positions at the moment the FL is reached, and closed markers indicate terminal positions.
    }
    \label{fig:FL-tributary.tikz}
\end{figure}

\section{Conclusion}
\label{sec:Conclusion}

Although the classical Lady in the Lake scenario has been solved for quite some time, the question of what, specifically, to do ``under'' the barrier curve was open.
This paper has addressed that question by providing the $\min \max$ time and associated equilibrium strategies for $L$ to reach the antipodal point.
Subsequent to reaching the antipodal point, $L$ then continues on to reach the shore and obtain the equilibrium terminal angular separation.
Traditional differential game theory methods have been used to obtain the solution of the $\min \max$ time to reach the antipodal point game.
Interestingly, its solution is made up of two singular surfaces and their tributaries.
The approach taken in this paper will serve as a stepping stone to address the more difficult game of $\min \max$ time to escape (i.e., similar to the problem posed in~\cite{mutalik2021math}).

\printbibliography

\end{document}